\newcommand\R{{\mathbb{R}}}
\newcommand\T{{\mathbb{T}}}
\newcommand\C{{\mathbb{C}}}
\newcommand\Z{{\mathbb{Z}}}
\newcommand\N{{\mathbb{N}}}
\newcommand\supp{\operatorname{supp}}
\newcommand\dist{\operatorname{dist}}
\newcommand\CB{{\mathcal B}}
\newcommand\CI{{\mathcal I}}
\newcommand\CM{{\mathcal M}}
\newcommand\CS{{\mathcal S}}
\newcommand\diam{{\operatorname{diam}}}
\theoremstyle{plain}
  \newtheorem{theorem}{Theorem}
  \newtheorem{proposition}[theorem]{Proposition}
  \newtheorem{lemma}[theorem]{Lemma}
  \newtheorem{corollary}[theorem]{Corollary}
\theoremstyle{definition}
  \newtheorem{definition}[theorem]{Definition}
  \newtheorem{example}[theorem]{Example}
\date{\today}
\title{Subdyadic square functions and applications to weighted harmonic analysis}
\author{David Beltran}
\author{Jonathan Bennett}
\address{Department of Mathematics, University of Birmingham, Edgbaston Birmingham B15 2TT}
\email{d.beltran@pgr.bham.ac.uk}\email{j.bennett@bham.ac.uk}
\thanks{This work was supported by the European Research Council [grant
number 307617].}
\keywords{Square functions; Fourier multipliers; Weighted inequalities; Oscillatory integrals}
\begin{document}
\begin{abstract}
Through the study of novel variants of the classical Littlewood--Paley--Stein $g$-functions, we obtain pointwise estimates for broad classes of highly-singular Fourier multipliers on $\mathbb{R}^d$ satisfying regularity hypotheses adapted to fine (subdyadic) scales. In particular, this allows us to efficiently bound such multipliers by geometrically-defined maximal operators via general weighted $L^2$ inequalities, in the spirit of a well-known conjecture of Stein.
Our framework applies to solution operators for dispersive PDE, such as the time-dependent free Schr\"odinger equation, and other highly oscillatory convolution operators that fall well beyond the scope of the Calder\'on--Zygmund theory.
\end{abstract}
\maketitle

\section{Introduction}\label{sec:Intro}

A common feature of many themes in both classical and contemporary harmonic analysis is the pivotal role played by  operators which exhibit a certain \emph{quadratic} structure.
Such operators are usually referred to as \emph{square functions}, and their study has its roots in the classical Littlewood--Paley theory (see for example \cite{Stein70SI}, \cite{Stein70LP}, \cite{Stein82}). A common role of the square functions is to capture manifestations of orthogonality in $L^p$ spaces for $p\neq 2$. A primordial example is the square function (or \emph{$g$-function})
\begin{equation}\label{classg}
g(f)(x):=\left(\int_0^\infty \Big|\frac{\partial u}{\partial t}(x,t)\Big|^2tdt\right)^{1/2},
\end{equation}
where $u:\mathbb{R}^d \times \mathbb{R}_+\rightarrow\mathbb{R}$ denotes the Poisson integral of the function $f$ on $\mathbb{R}^d$. While Plancherel's theorem quickly reveals that $\|g(f)\|_2\equiv\|f\|_2$, the key point is that this property essentially persists on $L^p$ -- that is, the norms $\|g(f)\|_p$ and $\|f\|_p$ are equivalent for all $1<p<\infty$. Such facts have many important consequences, making square functions a central tool in modern analysis and PDE. In particular, square functions play a striking role in the classical theory of Fourier multipliers. On an abstract level, this approach to multipliers, originating in fundamental work of Stein \cite{Stein70SI}, consists of identifying square functions $g_1$ and $g_2$ for which we have the \emph{pointwise} estimate
\begin{equation}\label{abstractpointwise}
g_1(T_mf)(x)\lesssim g_2(f)(x);
\end{equation}
here $T_m$ denotes the convolution operator with Fourier multiplier $m$.\footnote{Throughout this paper we shall write $A\lesssim B$ if there exists a constant $c$ such that $A\leq cB$. In particular, this constant will always be independent of the input function $f$, variable $x$ and weight function $w$. The relations $A\gtrsim B$ and $A\sim B$ are defined similarly.}
Given such an estimate one may then deduce bounds on $T_m$ from bounds on the square functions $g_1$ and $g_2$. More specifically, if one has
\begin{equation}\label{abstractbounds}
\|f\|_X\lesssim
\|g_1(f)\|_Y\;\;\mbox{ and }\;\;\|g_2(f)\|_Y\lesssim\|f\|_Z,
\end{equation}
for suitable normed spaces $X,Y,Z$, then the pointwise estimate \eqref{abstractpointwise} quickly reveals that
\begin{equation}\label{mechanism}
\|T_mf\|_X\lesssim\|g_1(T_m f)\|_Y\lesssim\|g_2(f)\|_Y\lesssim\|f\|_Z;
\end{equation}
that is, $T_m$ is bounded from $Z$ to $X$.\footnote{Of course this requires that the norm $\|\cdot\|_Y$ is increasing in the sense that $f_1\lesssim f_2\implies\|f_1\|_Y\lesssim\|f_2\|_Y$.}
The prime example of this approach in action is Stein's celebrated proof of the classical H\"ormander--Mikhlin multiplier theorem, which states that if a Fourier multiplier $m$ on $\mathbb{R}^d$ satisfies
\begin{equation}\label{HormClassic}
\sup_{r>0}\|m(r \cdot) \Psi\|_{H^\sigma} < \infty
\end{equation}
for some $\sigma>d/2$, or equivalently
\begin{equation}\label{HormInh}
\sup_{r>0} r^\theta r^{-d/2} \|m\Psi(r^{-1}\cdot)\|_{\dot{H}^{\theta}} < \infty
\end{equation}
for all $0 \leq \theta \leq \sigma$ and some $\sigma>d/2$, then $m$ is an $L^p$ multiplier for $1<p<\infty$. Here $H^\sigma$ and $\dot{H}^\sigma$ denote the usual inhomogeneous and homogeneous Sobolev spaces of order $\sigma$ respectively, and $\Psi$ is a suitable smooth function with compact support away from the origin.
Stein's proof \cite{Stein70SI} consists of establishing the pointwise estimate
\begin{equation}\label{point}g(T_mf)(x) \lesssim g_\lambda^*(f)(x)\end{equation}
where $g$ is defined in \eqref{classg}, and $g_\lambda^*$ is a further, more robust, variant of $g$ given by
$$
g_\lambda^*(f)(x):=\Big( \int_{\R^{d+1}_+} |\nabla u(y,t)|^2 \Big(1+\frac{|x-y|}{t} \Big)^{-d\lambda} \frac{dydt}{t^{d-1}}\Big)^{1/2}
$$
with $\lambda=2\sigma/d>1$. Since both $g$ and $g_{\lambda}^*$ characterise $L^p$ norms in the sense that
\begin{equation}\label{lpc}
\|g(f)\|_p\sim\|g_\lambda^*(f)\|_p\sim\|f\|_p,
\end{equation}
the pointwise inequality \eqref{point} immediately yields $\|T_mf\|_p\lesssim\|f\|_p$, completing Stein's argument. \footnote{Technically speaking, the equivalence \eqref{lpc} only holds for $2/\lambda<p<\infty$, giving boundedness for $T_m$ when $2\leq p < \infty$. The range $1<p<2$ follows by duality.}
Stein's approach is surprisingly effective also in weighted contexts. For example, if $m$ is a H\"ormander--Mikhlin multiplier on $\mathbb{R}^d$, then by establishing an estimate of the form \eqref{point}, and suitable weighted estimates for square functions closely related to $g$ and $g_\lambda^*$, it follows that
\begin{equation}\label{wilsonest}
\int_{\R^d} |T_mf|^2 w \lesssim \int_{\R^d} |f|^2 M^7w
\end{equation}
for any weight $w$. Here $M$ denotes the classical Hardy--Littlewood maximal operator and $M^7$ its 7-fold composition. It should be noticed that \eqref{wilsonest}, combined with the Hardy--Littlewood maximal theorem, implies the full H\"ormander--Mikhlin multiplier theorem by straightforward duality considerations. Inequality \eqref{wilsonest} is a minor variant of a result of Wilson \cite{Wi89} in the context of certain Calder\'on--Zygmund operators, where such a weighted estimate was obtained via a pointwise estimate of the kind \eqref{abstractpointwise}.\footnote{Unlike in \cite{Wi89}, it is unlikely that the power on the Hardy--Littlewood maximal function is optimal in \eqref{wilsonest}; we do not concern ourselves with such finer points in this paper.} We refer to \cite{Stein70LP}, \cite{CGT}, \cite{Car85}, \cite{LRSsf} for further discussion of Stein's approach.

The purpose of this paper is to show that Stein's approach to proving Fourier multiplier theorems continues to function efficiently far beyond the classical H\"ormander--Mikhlin, Calder\'on--Zygmund or Muckenhoupt $A_p$ theories. The class of multipliers that we consider involves a H\"ormander--Mikhlin-type condition adapted to scales that are much finer than dyadic; such finer scales we shall refer to as \emph{subdyadic}. This class, which originates in work of Miyachi \cite{Mi80,Mi81} (see also H\"ormander \cite{Hormander}), encompasses a wide variety of highly-oscillatory convolution operators on $\mathbb{R}^d$, including solution operators to linear dispersive PDE, such as the time-dependent free Schr\"odinger equation. As a consequence we are able to efficiently bound such multipliers by geometrically-defined maximal operators via general weighted $L^2$ inequalities in the spirit of a well-known conjecture of Stein.

For the sake of clarity we begin by describing the weaker Mikhlin-type formulation of our classes of multipliers. Given $\alpha,\beta \in \R$, consider the class of multipliers $m$ on $\mathbb{R}^d$, with support in the set $\{\xi \in \R^d : |\xi|^\alpha \geq 1\}$, satisfying the Miyachi condition
\begin{equation}\label{MikhlinCondition}
|D^\gamma m(\xi)|\lesssim |\xi|^{-\beta+|\gamma|(\alpha-1)}
\end{equation}
for every multi-index $\gamma \in \N^d$ with $|\gamma| \leq \lfloor \frac{d}{2} \rfloor +1$, where $|\gamma|=\gamma_1+\cdots+\gamma_d$.
This class is modelled by the examples
$
m_{\alpha,\beta}(\xi):=|\xi|^{-\beta}e^{i|\xi|^\alpha}
$, first studied by Hirschman \cite{Hi59}, and later by Wainger \cite{Wainger}, Fefferman \cite{Fe70}, Fefferman and Stein \cite{FSmult}, Miyachi \cite{Mi80,Mi81} and others. We note that these multipliers often correspond to highly-oscillatory convolution kernels; see for example \cite{Sjo81Lp} or the forthcoming Corollary \ref{osccor}. The support condition on $m$ is desirable here, as to impose the \emph{same} power-like behaviour \eqref{MikhlinCondition} as $|\xi|\rightarrow 0$ \emph{and} $|\xi|\rightarrow \infty$ would be artificial, at least for $\alpha\neq0$; for example the specific multiplier $m_{\alpha,\beta}$ naturally satisfies \eqref{MikhlinCondition} for $|\xi|^\alpha\geq 1$, but $|D^\gamma m(\xi)|\lesssim |\xi|^{-\beta-|\gamma|}$ for $|\xi|^\alpha\leq 1$. We refer the reader to Section \ref{sec:Dispersive} for further discussion of multipliers defined on the whole of $\mathbb{R}^d\backslash\{0\}$ satisfying such ``two-sided" conditions. The presence of a distinguished (unit) scale here is indeed quite conventional, as may be seen in the formulation of the standard symbol classes $S^m$; see for example \cite{bigStein}. The advantage of imposing a support condition rather than a global estimate of the form $|D^\gamma m (\xi)|\lesssim (1+|\xi|)^{-\beta+|\gamma|(\alpha-1)}$ is that it also has content for $\alpha<0$.

Our results will naturally apply to broader classes of multipliers than that given by the pointwise condition \eqref{MikhlinCondition}. For example, it will be enough to ask that the multiplier $m$, with support in $\{\xi \in \R^d : |\xi|^\alpha \geq 1\}$, satisfies the weaker condition
\begin{equation}\label{HormSD}
\sup_{B}\; \dist(B,0)^{\beta+(1-\alpha)|\gamma|} \Big( \frac{1}{|B|}\int_B |D^\gamma m(\xi)|^2 d\xi \Big)^{1/2}< \infty
\end{equation}
for all $\gamma \in \N^d$ with $|\gamma|\leq \lfloor \frac{d}{2} \rfloor +1$. Here the supremum is taken over all euclidean balls $B$ in $\mathbb{R}^d$ with $\dist(B,0)^\alpha\geq 1$
such that $$r(B) \sim \dist(B,0)^{1-\alpha},$$ where $r(B)$ denotes the radius of $B$. Observe that for $\alpha\neq 0$, typically $r(B)\ll\dist(B,0)$, making it natural to refer to such balls as \emph{subdyadic} (or \textit{$\alpha$-subdyadic}). As may be expected, the condition \eqref{HormSD} may be weakened still further to
\begin{equation}\label{HormSobSD}
\sup_{B} \dist(B,0)^{\beta+(1-\alpha)\theta} |B|^{-1/2}\|m\Psi_B\|_{\dot{H}^\theta}<\infty,
\end{equation}
for all $0\leq \theta\leq \sigma$ and some $\sigma>d/2$,
uniformly over normalised bump functions $\Psi_B$ adapted to an $\alpha$-subdyadic ball $B$ (see Section \ref{sec:Horm} for the definition of a normalised bump function and further clarification).
This is easily seen to reduce to the classical H\"ormander condition \eqref{HormInh} when $\alpha=\beta=0$.
Observe that \eqref{MikhlinCondition} implies \eqref{HormSD}, which in turn implies the more general condition \eqref{HormSobSD} by the Leibniz formula.

In the context of these multipliers we introduce the square function
\begin{equation}\label{subsquare}
g_{\alpha,\beta}(f)(x):=\Big(\int_{\Gamma_\alpha(x)} |f \ast \phi_t(y)|^2 \frac{dy}{t^{(1-\alpha)d+2\beta}} \frac{dt}{t}\Big)^{1/2},
\end{equation}
where $\phi$ is a smooth function with suitable compact Fourier support away from the origin, $\phi_t(x):=t^{-d}\phi(x/t)$ for $t>0$, and
$$
\Gamma_\alpha(x):=\{(y,t)\in \R^{d}\times\R_+: t^\alpha \leq 1, |y-x| \leq t^{1-\alpha}\}.
$$
For $\alpha\neq 0$ the region $\Gamma_\alpha(x)$ is very different from the classical cone $\Gamma_0(x)$. In particular, when $\alpha>1$ it becomes an ``inverted cone", allowing approach to infinite order, and when $\alpha<0$, it is perhaps best interpreted as an ``escape" region since $t\geq 1$; see \cite{Ben2014} for further discussion of these regions in the context of maximal operators and the dimension $d=1$.

By close analogy with the classical $g_\lambda^*$ we also introduce the more robust square function
$$
g_{\alpha,\beta,\lambda}^*(f)(x)=\Big(\int_{t^\alpha \leq 1} \int_{\R^d} |f \ast \phi_t (y)|^2 \Big(1+\frac{|x-y|}{t^{1-\alpha}} \Big)^{-d\lambda} \frac{dy}{t^{(1-\alpha)d+2\beta}}\frac{dt}{t}\Big)^{1/2},
$$
which is manifestly a pointwise majorant of
$g_{\alpha,\beta}$. It should be observed that $g_{0,0}$ and $g_{0,0,\lambda}^*$ are minor variants of the classical $g$ and $g_{\lambda}^*$, where $\partial_t P_t$ and $\nabla P_t$ are formally replaced by $t\phi_t$. Here $P_t$ denotes the Poisson kernel.
As will become clear later, the ``local" nature of these square functions in the $t$-integral is related to the support property of the multipliers $m$ described above.

The following three theorems capture Stein's approach in this setting, with the more novel aspects appearing in the first two. Our main application, from which all subsequent applications essentially derive, is contained in Corollary \ref{MikhlinThm}.
\begin{theorem} \label{PointwiseThm}
Let $\alpha,\beta \in \R$ and $m$ be a multiplier satisfying \eqref{HormSobSD}. Then
\begin{equation}\label{pointwise}
g_{\alpha,\beta}(T_mf)(x) \lesssim g_{\alpha,0,\lambda}^*(f)(x),
\end{equation}
with $\lambda=2\sigma/d>1$.
\end{theorem}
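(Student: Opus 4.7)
The plan is to follow Stein's classical strategy for Hörmander--Mikhlin multipliers, adapted to the subdyadic geometry implicit in the hypothesis \eqref{HormSobSD}. The starting point is a Fourier-side identity: let $\tilde\phi$ be a smooth bump equal to $1$ on the support of $\hat\phi$ and supported in a slightly larger annulus, and set $K_t := (m \, \tilde\phi(t \cdot))^\vee$. Since $\tilde\phi(t\cdot) \equiv 1$ on the support of $\hat\phi(t\cdot)$, we have the identity
\[
T_m f \ast \phi_t \;=\; K_t \ast (f \ast \phi_t),
\]
which isolates the piece of $m$ relevant at frequency scale $1/t$.

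Next, for each $(y,t) \in \Gamma_\alpha(x)$, I would apply Cauchy--Schwarz against the weight $(1 + |z|/t^{1-\alpha})^{d\lambda}$ (with $d\lambda = 2\sigma$), which is tailored to the physical scale $t^{1-\alpha}$ appearing in both $g_{\alpha,\beta}$ and $g^*_{\alpha,0,\lambda}$:
\[
|T_m f \ast \phi_t(y)|^2 \;\leq\; \Bigl(\int |K_t(z)|^2 (1 + |z|/t^{1-\alpha})^{d\lambda}\,dz\Bigr) \cdot G_\alpha(y,t),
\]
where $G_\alpha(y,t) := \int |f \ast \phi_t(w)|^2 (1 + |y-w|/t^{1-\alpha})^{-d\lambda}\,dw$ is the integrand of $g^*_{\alpha,0,\lambda}$ centered at $y$.

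The heart of the argument, and the step I expect to be most delicate, is the weighted $L^2$ bound on $K_t$ in the first factor. By Parseval, this reduces to estimating a Sobolev-type norm of $m\,\tilde\phi(t\cdot)$ at frequency scale $t^{\alpha-1}$ (the Heisenberg dual of $t^{1-\alpha}$). The support of $\tilde\phi(t\cdot)$ is exactly the annulus $\{|\xi| \sim 1/t\}$, which one decomposes into $\alpha$-subdyadic balls $B$ with $r(B) \sim t^{\alpha-1}$; each piece $m \Psi_B$ is controlled by the Hörmander--Sobolev hypothesis \eqref{HormSobSD} for every $0 \leq \theta \leq \sigma$. The interpolating range of $\theta$ exactly absorbs the successive powers $t^{-2\theta(1-\alpha)}$ arising from expanding the weight, so that every term contributes the same order. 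This is where the subdyadic, rather than dyadic, nature of the hypothesis is used essentially; one must also be careful to use Fourier-orthogonality between the subdyadic pieces (rather than only a crude triangle-inequality sum over balls), since that is what keeps the final estimate balanced against the physical scale $t^{1-\alpha}$ of $\Gamma_\alpha(x)$.

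Finally, I would substitute the pointwise estimate into the definition of $g_{\alpha,\beta}(T_m f)(x)^2$, swap the $y$- and $w$-integrals inside $G_\alpha(y,t)$, and use the standard fact
\[
\int_{|y-x| \leq t^{1-\alpha}} \bigl(1 + |y-w|/t^{1-\alpha}\bigr)^{-d\lambda}\,dy \;\lesssim\; t^{(1-\alpha)d}\bigl(1 + |x-w|/t^{1-\alpha}\bigr)^{-d\lambda}
\]
to shift the weight's center from $y$ to $x$ at the cost of a factor $t^{(1-\alpha)d}$. The powers of $t$ then telescope against the normalizations $t^{(1-\alpha)d + 2\beta}$ in $g_{\alpha,\beta}$ and $t^{(1-\alpha)d}$ in $g^*_{\alpha,0,\lambda}$, yielding the claimed inequality with $\lambda = 2\sigma/d > 1$.
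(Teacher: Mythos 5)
Your plan loses a factor of $t^{-\alpha d}$ at the Cauchy--Schwarz step, and this factor diverges whenever $\alpha \neq 0$. The kernel $K_t = (m\,\tilde\phi(t\cdot))^\vee$ gathers the multiplier over the \emph{entire} dyadic annulus $\{|\xi|\sim 1/t\}$, and its weighted norm $C_t := \int |K_t(z)|^2(1+|z|/t^{1-\alpha})^{2\sigma}\,dz$ is of size $t^{2\beta-d}$. One sees this either directly from \eqref{MikhlinCondition}, or, as you propose, by summing the per-ball bound \eqref{HormSobSD} over the $\sim t^{-\alpha d}$ subdyadic balls tiling the annulus --- and here the Fourier-orthogonality you invoke (i.e.\ Plancherel over the pieces) makes the sum sharp but does not eliminate the factor $t^{-\alpha d}$; it merely counts the balls. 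For your final telescoping against the normalisations in $g_{\alpha,\beta}$ and $g^*_{\alpha,0,\lambda}$ you need $C_t \lesssim t^{2\beta + (\alpha-1)d}$, and the discrepancy $t^{-\alpha d}$ is unbounded on $\{t^\alpha \leq 1\}$ (as $t\to 0$ for $\alpha>0$, as $t\to\infty$ for $\alpha<0$). When $\alpha=0$ the discrepancy disappears and your outline becomes Stein's classical argument; this is precisely why it cannot be transplanted verbatim to the subdyadic case.

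The missing idea is that the useful orthogonality between subdyadic pieces is an \emph{integral} phenomenon realised only after integrating in $y$ against the smooth weight, not a pointwise one at fixed $(y,t)$. The paper therefore decomposes $f$ (not merely $m$) via a subdyadic partition $\{\psi_B\}$ and proves the decoupling inequality \eqref{decoupling}: multiplying out the square and integrating against $\Phi((x-y)/t^{1-\alpha})$, whose Fourier transform lives at scale $t^{\alpha-1}\sim r(B)$, kills the cross-terms between $f*\psi_B*\phi_t$ and $f*\psi_{B'}*\phi_t$ unless $\dist(B,B')\lesssim r(B)$, leaving only $O(1)$ contributions per $B$. This reduces the theorem to the single-ball pointwise estimate \eqref{pointlocal}, where a Cauchy--Schwarz argument of exactly your type does succeed because the per-ball weighted $L^2$ norm of $T_m\varphi_B$ is $\lesssim t^{2\beta+(\alpha-1)d}$, the correct power. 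The re-coupling inequality \eqref{recoupling}, proved through the Bessel-type Lemma \ref{EqSpaced} for a lattice-based partition, then reassembles the pieces without loss. Without this detour through a subdyadic decomposition of $f$, the pointwise-in-$(y,t)$ Cauchy--Schwarz cannot capture the subdyadic gain.
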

We note that it is not necessary to impose a support condition on the multiplier $m$ in Theorem \ref{PointwiseThm} thanks to the Fourier support property of the function $\phi$ in the definition of $g_{\alpha,\beta}$. This Fourier support property prevents $g_{\alpha,\beta}(T_mf)$ from detecting the portion of multiplier $m$ supported in $\{\xi \in \R^d: |\xi|^\alpha \leq 1\}$; we refer to the end of Section \ref{sec:Horm} for full details of the relevant properties of $\phi$, and further clarification of this technical point.

The pointwise estimate \eqref{pointwise} has an even broader context in classical and contemporary harmonic analysis. Indeed pointwise estimates of the type \eqref{abstractpointwise} may be found in situations where the square functions $g_1$ and $g_2$ are replaced by other sorts of auxiliary operators. A classical example in the setting of Calder\'on--Zygmund theory, due to Córdoba and Fefferman \cite{CF}, replaces $g_1$ and $g_2$ by the \emph{sharp maximal function} $M^\#$ and (essentially) the Hardy--Littlewood maximal function $M$ respectively. More sophisticated variants of this estimate have proved highly effective very recently, for instance, playing a central role in Lerner's alternative proof of the $A_2$-conjecture \cite{LeA2}; see also Hyt\"onen \cite{Hyt2012}. Further developments in this direction have led to even stronger pointwise estimates for Calder\'on--Zygmund operators, where the first auxiliary operator $g_1$ is entirely absent; see Lacey \cite{Lac2015} and the references there. Our Theorem \ref{PointwiseThm}, being strongly oscillatory in nature (see the forthcoming Corollary \ref{osccor} for instance), is perhaps closer in perspective to the pointwise estimates obtained by Grafakos, Martell and Soria
\cite{GMS} for the Carleson operator, and by Bernicot \cite{Ber2014} for multi-frequency Calder\'on--Zygmund operators.

Theorem \ref{PointwiseThm}, along with the general mechanism \eqref{mechanism}, allows one to find bounds for the multipliers \eqref{MikhlinCondition} provided suitable forward and reverse bounds for $g_{\alpha,0,\lambda}^*$ and $g_{\alpha,\beta}$ (respectively) may be found. Of these our reverse bound is the more interesting since it involves a natural maximal function analogue of $g_{\alpha,\beta}$.
\begin{theorem}\label{ReverseThm}
Let $\alpha,\beta \in \R$, and $f$ be a function such that $\supp(\widehat{f})\subseteq \{\xi \in \R^d : |\xi|^\alpha \geq 1\}$. Then
\begin{equation*}
\int_{\R^d}|f|^2w \lesssim \int_{\R^d} g_{\alpha,\beta}(f)^2 \CM_{\alpha,\beta}M^4w
\end{equation*}
for any weight $w$, where
$$
\CM_{\alpha,\beta} w(x)= \sup_{(y,r) \in \Gamma_\alpha(x)} \frac{1}{|B(y,r)|^{1-2\beta/d}} \int_{B(y,r)} w.
$$
\end{theorem}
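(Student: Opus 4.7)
The strategy is to combine a Calderón-type reproducing formula with a weighted Cauchy--Schwarz inequality, balanced so as to absorb a copy of $\int |f|^2 w$ back into the left-hand side.

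First I would fix a Schwartz function $\phi$ with $\widehat\phi$ supported in an annulus and normalised so that the Calderón condition $c\int_0^\infty |\widehat\phi(t\xi)|^2 \frac{dt}{t} = 1$ holds on the support $\{|\xi|^\alpha \geq 1\}$ of $\widehat f$. Polarising this identity via Plancherel, and noting that the annular support of $\widehat\phi$ together with the Fourier support of $f$ restricts the $t$-integration to $\{t^\alpha \leq 1\}$, one obtains
\[
\int_{\R^d} |f|^2 w\, dx \,=\, c\int_{t^\alpha \leq 1}\!\int_{\R^d} F_t(y)\, \overline{G_t(y)}\, dy \frac{dt}{t},
\]
where $F_t := f \ast \phi_t$ and $G_t := (\bar f w) \ast \widetilde{\phi}_t$.

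Next I would apply the weighted AM--GM splitting
\[
|F_t(y) G_t(y)| \leq \tfrac{\epsilon}{2} |F_t(y)|^2 \tau(y,t) + \tfrac{1}{2\epsilon} |G_t(y)|^2 \tau(y,t)^{-1}
\]
with the balancing weight
\[
\tau(y,t) := \frac{1}{t^{2\beta}\, |B(y,t^{1-\alpha})|} \int_{B(y,t^{1-\alpha})} \CM_{\alpha,\beta} M^4 w.
\]
A Fubini calculation, using that $|B(y,t^{1-\alpha})| \sim t^{(1-\alpha)d}$ and $\{x : (y,t)\in\Gamma_\alpha(x)\}=B(y,t^{1-\alpha})$ for $t^\alpha\leq 1$, identifies the $F$-term with the desired right-hand side:
\[
\int_{t^\alpha\leq 1}\!\int |F_t(y)|^2 \tau(y,t)\, dy \frac{dt}{t} = c\int_{\R^d} g_{\alpha,\beta}(f)^2 \, \CM_{\alpha,\beta} M^4 w\, dx.
\]

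The crux of the argument is then to establish
\[
\int_{t^\alpha\leq 1}\!\int |G_t(y)|^2 \tau(y,t)^{-1}\, dy \frac{dt}{t} \,\lesssim\, \int_{\R^d} |f|^2 w\, dx,
\]
so that a suitably large choice of $\epsilon$ permits absorption into the left-hand side. I would proceed by (i) a Cauchy--Schwarz inside the convolution defining $G_t$ to obtain the pointwise bound $|G_t(y)|^2 \lesssim M(|f|^2 w)(y)\cdot Mw(y)$; (ii) a pointwise lower bound $\CM_{\alpha,\beta} M^4 w(z) \gtrsim t^{2\beta}\, M^3 w(z)$ for $z\in B(y,t^{1-\alpha})$, obtained by taking $r=t$ in the supremum defining $\CM_{\alpha,\beta}$ and iterating $M^4 w \geq M^3 w$; (iii) convergence of the $\frac{dt}{t}$-integral using the rapid decay of $\phi$ and the appropriate geometric matching of the scales $t$ and $t^{1-\alpha}$; and (iv) a final application of Fubini together with the Hardy--Littlewood maximal theorem applied successively (supplying the $M^4$ iteration).

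The principal obstacle is this last estimate: the square function averages over balls of radius $t^{1-\alpha}$, whereas $\CM_{\alpha,\beta}$ is expressed in terms of balls of radius $r$, and these scales must be reconciled through a careful chain of maximal bounds. The number of iterations of $M$ required is not expected to be sharp (compare the footnote following \eqref{wilsonest}), but four iterations should comfortably suffice given the two Cauchy--Schwarz steps involved in controlling $|G_t|^2$ and the further averaging needed to produce $\CM_{\alpha,\beta}$.
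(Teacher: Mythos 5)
Your polarisation-plus-absorption scheme is a genuinely different route from the paper's: the paper first invokes a weighted reverse Littlewood--Paley inequality $\int|f|^2w\lesssim\int s_\phi(f)^2 M^3w$ (proved in the appendix via a randomised Calder\'on--Zygmund argument), and then redistributes the averaging onto the weight with the ``reverse Cauchy--Schwarz'' Lemma~\ref{L1Linf} at scale $R=t^{\alpha-1}$, never polarising the Calder\'on reproducing formula. Unfortunately, as it stands your argument has a genuine gap in the crucial $G$-term estimate.

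The problem is that Cauchy--Schwarz inside the convolution defining $G_t$ destroys precisely the cancellation of $\widetilde{\phi}_t$ that would make the $\frac{dt}{t}$-integral converge. After step (i) you have $|G_t(y)|^2\lesssim (|\widetilde{\phi}_t|\ast(|f|^2w))(y)\,(|\widetilde{\phi}_t|\ast w)(y)\lesssim M(|f|^2w)(y)\,Mw(y)$, a bound that is \emph{uniform} in $t$. On the other hand, the only lower bound on $\tau$ that the definition of $\CM_{\alpha,\beta}$ yields by taking $r=t$ is $\tau(y,t)\gtrsim A_t(M^4w)(y)$, where $A_t$ denotes the average over $B(y,t)$; for $\alpha>0$ this tends to $M^4w(y)$ as $t\to0$, so $\tau(y,t)^{-1}$ is also essentially $t$-independent in the relevant range. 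Feeding these bounds into the $G$-term and applying Fubini reduces matters to showing $\int_{t^\alpha\leq1}\frac{dt}{t}\lesssim1$, which is false --- the domain $\{t^\alpha\leq1\}$ is a half-line in $\log t$. Attempting to retain the cancellation by viewing the $G$-term as a weighted square function of $\bar{f}w$ and applying the forward estimate \eqref{ForwardCont} leads instead to $\int|f|^2w^2\cdot M(\,\cdot\,)$, which cannot be reduced to $\int|f|^2w$ for a general weight.

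Two further points. The claimed pointwise bound in step (ii), namely $\CM_{\alpha,\beta}M^4w(z)\gtrsim t^{2\beta}M^3w(z)$ for every $z\in B(y,t^{1-\alpha})$, does not follow from taking $r=t$: that choice witnesses $\CM_{\alpha,\beta}M^4w(z)\geq t^{2\beta}A_t(M^4w)(y)$, and when $\alpha\neq0$ the ball $B(y,t)$ has radius vastly smaller than $t^{1-\alpha}$, so there is no reason its average should dominate $M^3w(z)$ across the whole of $B(y,t^{1-\alpha})$. Finally, the absorption step requires $\int|f|^2w<\infty$ a priori; a qualitative truncation argument is needed, which you do not mention. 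The paper's route avoids all of these difficulties by front-loading the hard cancellation estimate into the weighted Littlewood--Paley inequality for $s_\phi$ and then manipulating the weight with the elementary Lemma~\ref{L1Linf}.
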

It is interesting to compare $g_{\alpha,\beta}$ with Rubio de Francia's square function for arbitrary intervals \cite{RdF}. Theorem \ref{ReverseThm} for $(\alpha,\beta)\neq (0,0)$ may be viewed as a substitute for the lack of reverse $L^p$ estimates in \cite{RdF} when $2\leq p<\infty$; see Section \ref{sec:MaxOp} for further discussion. The reader may also wish to compare Theorem \ref{ReverseThm} with the classical duality between nontangential maximal operators and Carleson measures in \cite{bigStein}, and the recent $L^p$ theory for outer measures of Do and Thiele \cite{DT}.

The maximal operator $\CM_{\alpha,\beta}$ may be interpreted as a fractional Hardy--Littlewood maximal operator associated with the region $\Gamma_{\alpha}$; we refer to \cite{Ben2014} for some further discussion in the one-dimensional case. Naturally, the case $\alpha=0$ corresponds to the classical fractional Hardy--Littlewood maximal function. For $0<\alpha<1$ the maximal functions $\CM_{\alpha,\beta}$ are closely-related to those considered by Nagel and Stein \cite{NS} in a different context. In dimensions larger than one, the maximal operators $\mathcal{M}_{\alpha,\beta}$ are relatives of the \emph{Nikodym} (or \emph{Kakeya}) maximal operators. In particular, elementary considerations reveal the pointwise bound
\begin{equation}\label{NikKak}
\mathcal{M}_{\alpha, \beta}f\gtrsim\mathcal{N}_{\alpha,\beta}f,
\end{equation}
where
$$
\mathcal{N}_{\alpha,\beta}f(x):=\sup_{0<r^{\alpha}\leq 1}\sup_{\substack{T\in\mathcal{T}_\alpha(r)\\T\ni x}}\frac{r^{2\beta}}{|T|}\int_T|f|
$$
and $\mathcal{T}_{\alpha}(r)$ denotes the collection of cylindrical tubes $T$ of length $r^{1-\alpha}$ and cross-sectional radius $r$ in $\mathbb{R}^d$. The inequality \eqref{NikKak} follows merely by covering each $T\in\mathcal{T}_{\alpha}(r)$ by $O(r^{-\alpha})$ balls of radius $r$, and noting their positions.

The forward estimate for $g_{\alpha,\beta,\lambda}^*$ is more classical in nature than its reverse counterpart above. We state it here as a theorem for the sake of structural coherence.
\begin{theorem}\label{ForwardThm}
Let $\alpha \in \R$ and $\lambda>1$. Then
\begin{equation*}
\int_{\R^d} g_{\alpha,0,\lambda}^*(f)^2 w \lesssim \int_{\R^d} |f|^2 M^2w.
\end{equation*}
\end{theorem}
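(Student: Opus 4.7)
The plan is to treat the estimate as essentially two independent pieces: one that controls the pointwise factor $(1+|x-y|/t^{1-\alpha})^{-d\lambda}$ by the Hardy--Littlewood maximal function of $w$, and one that exploits the standard weighted $L^2$ theory for Littlewood--Paley square functions built from the convolution kernel $\phi_t$.

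First, by Fubini, rewrite
\begin{equation*}
\int_{\R^d} g_{\alpha,0,\lambda}^*(f)^2 w = \int_{t^\alpha\leq 1}\int_{\R^d} |f*\phi_t(y)|^2 \biggl(\int_{\R^d}\Bigl(1+\tfrac{|x-y|}{t^{1-\alpha}}\Bigr)^{-d\lambda} w(x)\,dx\biggr) \frac{dy}{t^{(1-\alpha)d}}\frac{dt}{t}.
\end{equation*}
For the inner $x$-integral, I would decompose $\R^d$ into the ball $B(y,t^{1-\alpha})$ and the dyadic annuli $B(y,2^{k+1}t^{1-\alpha})\setminus B(y,2^kt^{1-\alpha})$, estimate the weight $w$ on each piece by a multiple of $|B(y,2^{k+1}t^{1-\alpha})|\,Mw(y)$, and sum the geometric series against $2^{-kd\lambda}$ (which converges because $\lambda>1$). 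This produces the standard bound
\begin{equation*}
\int_{\R^d}\Bigl(1+\tfrac{|x-y|}{t^{1-\alpha}}\Bigr)^{-d\lambda} w(x)\,dx \lesssim t^{(1-\alpha)d}\,Mw(y),
\end{equation*}
which cancels the factor $t^{(1-\alpha)d}$ in the denominator.

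After a second application of Fubini, we are reduced to bounding
\begin{equation*}
\int_{\R^d} G(f)(y)^2\, Mw(y)\,dy,\qquad \text{where }G(f)(y)^2:=\int_{t^\alpha\leq 1}|f*\phi_t(y)|^2\,\frac{dt}{t}.
\end{equation*}
Since $G(f)\leq g(f)$ where $g$ denotes the classical continuous Littlewood--Paley square function generated by $\phi$ (whose Fourier transform is supported in an annulus away from the origin), it suffices to invoke the weighted estimate $\int g(f)^2 v\lesssim \int |f|^2 Mv$, which is a standard consequence of the identity $\widehat{\phi}\in C^\infty_c(\R^d\setminus\{0\})$ together with a Cotlar/vector-valued Calder\'on--Zygmund argument (or directly from Wilson-type weighted bounds). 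Applying this with $v=Mw$ yields
\begin{equation*}
\int G(f)^2\,Mw \lesssim \int |f|^2\,M(Mw) = \int |f|^2\,M^2w,
\end{equation*}
which combined with the preceding step completes the proof.

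The only real content is the weighted bound for the classical square function; the rest is Fubini and a standard decay estimate. The restriction $\lambda>1$ is used exactly once, to sum the geometric series in the first step, so the argument is sharp in this respect. No property of $\alpha$ is used beyond the fact that the region $\{t^\alpha\leq 1\}$ is a subset of $(0,\infty)$, so the estimate is uniform in $\alpha\in\R$.
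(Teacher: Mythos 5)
Your proof is correct and follows essentially the same route as the paper's: both use Fubini to rewrite the left-hand side, both observe that convolution of $w$ against the decaying kernel $(1+|x-y|/t^{1-\alpha})^{-d\lambda}t^{-(1-\alpha)d}$ is $\lesssim Mw(y)$ when $\lambda>1$ (which the paper states without the dyadic-annulus verification you supply), and both then enlarge the $t$-range to $(0,\infty)$ and invoke the classical weighted estimate $\int s_\phi(f)^2 v \lesssim \int |f|^2 Mv$ with $v=Mw$.
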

Combining Theorems 1--3 via \eqref{mechanism} immediately leads to the following weighted estimates for $T_m$.
\begin{corollary}\label{MikhlinThm}
Let $\alpha,\beta \in \R$ and $m$ be a multiplier supported in $\{ \xi \in \R^d : |\xi|^\alpha \geq 1\}$ satisfying
\eqref{HormSobSD}. Then
\begin{equation}\label{mainweight}
\int_{\R^d} |T_m f|^2 w \lesssim \int_{\R^d} |f|^2 M^2\CM_{\alpha,\beta}M^4w.
\end{equation}
\end{corollary}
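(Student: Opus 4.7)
The corollary is designed to drop straight out of the mechanism \eqref{mechanism} applied to the three theorems above, so my plan is to chain them together, paying attention only to the bookkeeping of the subscripts $\alpha,\beta,\lambda$ and the Fourier-support hypothesis in Theorem \ref{ReverseThm}.

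The first step is to check that $T_mf$ is legitimate input for Theorem \ref{ReverseThm}. Since $\widehat{T_mf}=m\widehat{f}$ and $m$ is supported in $\{\xi\in\R^d:|\xi|^\alpha\geq 1\}$ by hypothesis, the same support property holds for $\widehat{T_mf}$. Applying Theorem \ref{ReverseThm} with the weight $w$ of the corollary and $f$ replaced by $T_mf$ gives
\begin{equation*}
\int_{\R^d}|T_mf|^2 w \lesssim \int_{\R^d} g_{\alpha,\beta}(T_mf)^2\,\CM_{\alpha,\beta}M^4w.
\end{equation*}

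Next, Theorem \ref{PointwiseThm} yields the pointwise estimate $g_{\alpha,\beta}(T_mf)(x)\lesssim g_{\alpha,0,\lambda}^*(f)(x)$ (with $\lambda=2\sigma/d>1$, where $\sigma$ is the regularity exponent in \eqref{HormSobSD}); note that the multiplier class matches exactly the hypothesis of Theorem \ref{PointwiseThm}, and no support condition on $m$ is needed at this stage. Squaring and integrating against the nonnegative weight $\CM_{\alpha,\beta}M^4w$ preserves the inequality, so
\begin{equation*}
\int_{\R^d} g_{\alpha,\beta}(T_mf)^2\,\CM_{\alpha,\beta}M^4w \lesssim \int_{\R^d} g_{\alpha,0,\lambda}^*(f)^2\,\CM_{\alpha,\beta}M^4w.
\end{equation*}

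Finally, applying Theorem \ref{ForwardThm} with the weight $v:=\CM_{\alpha,\beta}M^4w$ (which is admissible, being an arbitrary nonnegative locally integrable function) gives
\begin{equation*}
\int_{\R^d} g_{\alpha,0,\lambda}^*(f)^2\, v \lesssim \int_{\R^d} |f|^2\,M^2 v = \int_{\R^d} |f|^2\,M^2\CM_{\alpha,\beta}M^4w,
\end{equation*}
and concatenating the three displays proves \eqref{mainweight}. There is no real obstacle here — the content lies entirely in the three theorems — but the one subtlety to verify is the first step, namely that the Fourier-support condition required by Theorem \ref{ReverseThm} is indeed inherited from the support of $m$, which is precisely why the support hypothesis on $m$ (absent from Theorem \ref{PointwiseThm}) must be reinstated in the statement of the corollary.
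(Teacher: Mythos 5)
Your argument is correct and is exactly what the paper intends by ``Combining Theorems 1--3 via \eqref{mechanism}'': apply Theorem \ref{ReverseThm} to $T_mf$ (admissible because the support of $m$ forces $\supp(\widehat{T_mf})\subseteq\{|\xi|^\alpha\geq 1\}$), then integrate the pointwise bound of Theorem \ref{PointwiseThm} against the weight $\CM_{\alpha,\beta}M^4w$, then apply Theorem \ref{ForwardThm} with that weight. Your remark that the support hypothesis on $m$ is reinstated precisely to license the first step is the only subtlety, and you identify it correctly.
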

Corollary \ref{MikhlinThm} provides us with an opportunity to comment on the optimality of Theorems \ref{PointwiseThm} -- \ref{ForwardThm}, since \eqref{mainweight}, along with elementary duality considerations, allows us to transfer $L^p-L^q$ bounds for $\mathcal{M}_{\alpha,\beta}$ to such bounds for $T_m$. As is easily verified, the optimal bounds on $\mathcal{M}_{\alpha,\beta}$ (see Section \ref{sec:MaxOp}) may be reconciled with the optimal bounds on the specific multipliers $m_{\alpha,\beta}$ (see Miyachi \cite{Mi80}) in this way.

We now turn to some rather less immediate applications of Theorems \ref{PointwiseThm} -- \ref{ForwardThm}.

\subsection{Oscillatory kernels} The method of stationary phase allows one to use Theorems \ref{PointwiseThm} -- \ref{ForwardThm} to obtain similar pointwise and general-weighted estimates for classes of highly oscillatory convolution kernels. For example we have the following:
\begin{corollary}\label{osccor}
For $a>0$, $a\neq 1$ and $b \geq d(1-\frac{a}{2})$, let $K_{a,b}:\R^d \to \C$ be given by
$$
K_{a,b}(x)=\frac{e^{i|x|^a}}{|x|^b}(1-\eta(x)),
$$
where $\eta \in C^\infty_c(\R^d)$ is such that $\eta(x)=1$ for all $x$ belonging to a neighbourhood of the origin. Then for any $\lambda>0$,
\begin{equation}\label{pointwiseosc}
g_{\alpha,\beta}(K_{a,b}*f)(x)\lesssim g_{\alpha,0,\lambda}^*(f)(x)
\end{equation}
and
\begin{equation}\label{mainosc}
\int_{\R^d} |K_{a,b} \ast f|^2 w \lesssim \int_{\R^d} |f|^2 M^2\CM_{\alpha,\beta}M^4w,
\end{equation}
where $\alpha=\frac{a}{a-1}$ and $\beta=\frac{da/2-d+b}{a-1}$.
\end{corollary}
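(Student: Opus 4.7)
The plan is to reduce Corollary \ref{osccor} to Theorem \ref{PointwiseThm} and Corollary \ref{MikhlinThm} by identifying $\widehat{K_{a,b}}$, modulo a tame perturbation, as a multiplier of Miyachi type \eqref{MikhlinCondition} with the advertised parameters $\alpha=a/(a-1)$ and $\beta=(b-d+da/2)/(a-1)$. Explicitly, I would decompose $\widehat{K_{a,b}}=m_1+m_2$, where $m_1=\chi\widehat{K_{a,b}}$ for a smooth cutoff $\chi$ supported in $\{|\xi|^\alpha\geq 1/2\}$ and equal to $1$ on $\{|\xi|^\alpha\geq 1\}$, then verify \eqref{MikhlinCondition} for $m_1$ and a Schwartz bound on the convolution kernel $\Phi_2$ of $T_{m_2}$.

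The central computation is a method-of-stationary-phase analysis of
\[
\widehat{K_{a,b}}(\xi)=\int_{\R^d}e^{i(|x|^a-x\cdot\xi)}\,\frac{1-\eta(x)}{|x|^b}\,dx
\]
on the region $\{|\xi|^\alpha\geq 1\}$. The rescaling $x=|\xi|^{1/(a-1)}y$ converts the phase into $|\xi|^\alpha\Psi(y)$, where $\Psi(y)=|y|^a-y\cdot\hat\xi$ has a unique nondegenerate critical point at $y_0=a^{-1/(a-1)}\hat\xi$, lying outside the rescaled cutoff $\eta$ once $|\xi|$ is sufficiently deep into $\{|\xi|^\alpha\geq 1\}$. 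Collecting the Jacobian $|\xi|^{d/(a-1)}$, the amplitude factor $|\xi|^{-b/(a-1)}$ and the $|\xi|^{-\alpha d/2}$ gain from the Hessian yields
\[
\widehat{K_{a,b}}(\xi)=c_a|\xi|^{-\beta}e^{ic'_a|\xi|^\alpha}+(\text{lower-order terms}),
\]
the hypothesis $b\geq d(1-a/2)$ being exactly $\beta(a-1)\geq 0$. Each application of $\partial^\gamma$ to the leading term produces a factor of $|\xi|^{\alpha-1}$ from the phase (strictly larger than the $|\xi|^{-1}$ produced when differentiating $|\xi|^{-\beta}$), giving the bound $|\partial^\gamma m_1(\xi)|\lesssim|\xi|^{-\beta+|\gamma|(\alpha-1)}$ required by \eqref{MikhlinCondition}, with the lower-order corrections satisfying strictly better estimates.

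The remainder $m_2$ is smooth; in the non-stationary regime, where the critical point of $|x|^a-x\cdot\xi$ lies inside $\{\eta=1\}$, it is rapidly decreasing, so $T_{m_2}$ is convolution with a Schwartz function $\Phi_2$. For the pointwise inequality \eqref{pointwiseosc}, the Fourier support property of the function $\phi$ appearing in $g_{\alpha,\beta}$ (noted explicitly in the remark after Theorem \ref{PointwiseThm}) ensures that $g_{\alpha,\beta}(T_{m_2}f)\equiv 0$, so Theorem \ref{PointwiseThm} applied to $m_1$ yields \eqref{pointwiseosc}. For \eqref{mainosc}, Cauchy--Schwarz gives $|T_{m_2}f|^2\lesssim|\Phi_2|\ast|f|^2$, whence $\int|T_{m_2}f|^2 w\lesssim\int|f|^2 Mw$, which is dominated by the right-hand side of \eqref{mainosc}, while Corollary \ref{MikhlinThm} applied to $m_1$ delivers the main term. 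The principal technical obstacle is executing the stationary-phase derivative estimate for $m_1$ uniformly across the whole of $\{|\xi|^\alpha\geq 1\}$, particularly near its boundary where the rescaled cutoff $\eta$ begins to interact with the critical point; this is routine but tedious, following the well-known Miyachi--Fefferman--Stein template \cite{Mi80, FSmult}.
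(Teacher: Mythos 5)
Your overall strategy matches the paper's: identify $\widehat{K}_{a,b}$ (restricted to $\{|\xi|^\alpha\gtrsim 1\}$) as a Miyachi multiplier, split off the low-frequency part, feed the high-frequency part into Theorem \ref{PointwiseThm} and Corollary \ref{MikhlinThm}, and treat the low-frequency remainder by hand. The stationary-phase computation reproduces what the paper delegates to Sj\"olin \cite{Sjo81Lp}, and the reduction of \eqref{pointwiseosc} to $m_1$ via the Fourier support property of $\phi$ is sound (the paper avoids the split altogether here, noting that Theorem \ref{PointwiseThm} only ever tests the multiplier on $\{|\xi|^\alpha\gtrsim 1\}$).

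However, there is a genuine gap in your treatment of $m_2$ for the weighted estimate \eqref{mainosc}. You pass from $|\Phi_2|*w$ to $Mw$ and then assert that $\int|f|^2\,Mw$ is dominated by $\int|f|^2\,M^2\CM_{\alpha,\beta}M^4w$. This requires the pointwise bound $Mw\lesssim M^2\CM_{\alpha,\beta}M^4w$, which is \emph{false} for $\beta\neq 0$. For instance, take $\alpha>1$, $0<\beta<d/2$, and $w=\epsilon^{-d}\chi_{B(0,\epsilon)}$: then $Mw(0)\sim\epsilon^{-d}$, but since $\Gamma_\alpha(x)$ only admits radii $r\leq 1$ and the factor $r^{2\beta}$ kills the small scales, one has $\|\CM_{\alpha,\beta}M^4w\|_\infty\lesssim\epsilon^{2\beta-d}$ and hence $M^2\CM_{\alpha,\beta}M^4w(0)\lesssim\epsilon^{2\beta-d}\ll\epsilon^{-d}$. (Similarly for $\alpha<0$, $\beta<0$, where $\CM_{\alpha,\beta}$ only averages over scales $r\geq 1$ and therefore sees none of the small-scale spike.) In short, $\CM_{\alpha,\beta}$ does not dominate the identity operator, so $Mw$ is too crude a majorant. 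The paper's fix is to exploit the Schwartz decay of $\widehat{m_0}$ more sharply: one has $|\widehat{m_0}|*w\lesssim A_1^*w$, where $A_1^*w=\sup_{r\geq 1}A_r w$ is the \emph{truncated} maximal function seeing only scales $\geq 1$, and then the chain $A_1^*w\lesssim A_1A_1^*w\lesssim\CM_{\alpha,\beta}A_1^*w\lesssim M^2\CM_{\alpha,\beta}M^4w$ goes through (the middle step uses only that $(x,1)\in\Gamma_\alpha(x)$, so $\CM_{\alpha,\beta}g\gtrsim A_1 g$). Replacing your $Mw$ by $A_1^*w$ repairs the argument.
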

This corollary is based on the fact, established in Sj\"olin \cite{Sjo81Lp}, that the Fourier transform of $K_{a,b}$ satisfies the pointwise bound \eqref{MikhlinCondition} for the indicated parameters $\alpha$, $\beta$, and all multi-indices $\gamma$; see Section \ref{sec:Cor}.

Inequality \eqref{mainweight} (see also \eqref{mainosc}), which includes the more classical \eqref{wilsonest} as a special case, is striking as it involves the control of a highly cancellative operator by a geometrically-defined (and positive) maximal function. This result bears a very close resemblance to a longstanding conjectured inequality in the context of the \emph{disc multiplier}, raised in the 1970s by Stein \cite{Stein79} (see also C\'ordoba \cite{CordKak} in the general context of the Bochner--Riesz multipliers), which is often referred to as \emph{Stein's conjecture}. In \cite{Stein79}, Stein conjectured that a weighted inequality of the general form \eqref{mainweight} should hold for the disc multiplier. In this case, the conjectured controlling maximal function is some variant of the \emph{universal maximal function}
$$
\mathcal{N}w(x):=\sup_{T\ni x}\frac{1}{|T|}\int_T w,
$$
where the supremum is taken over all rectangles $T$ containing the point $x$.
This question is far from having a satisfactory answer for $d\geq 2$; notice that the associated convolution kernel
\begin{equation}\label{BRKernel}
K(x):=\mathcal{F}^{-1}m(x)=c\frac{e^{2\pi i|x|}+e^{-2\pi i|x|}+o(1)}{|x|^{\frac{d+1}{2}}}
\end{equation}
is far from being integrable and is of course \emph{oscillatory}. Perhaps remarkably, this kernel takes the form of the missing endpoint case $a=1$ in Corollary \ref{osccor}, although it should be noted that the behaviour of these kernels is notoriously discontinuous there.
We refer the reader to the work of Carbery \cite{CarWeight}, Christ \cite{Christ}, Carbery and Seeger \cite{CS}, Carbery, Romera and Soria \cite{CRS}, Carbery, Soria, Vargas and the second author \cite{BCSV} or Lee, Rogers and Seeger \cite{LRSw} for further results in the direction of Stein's conjecture. Other recent examples of weighted inequalities and their controlling maximal operators in different oscillatory contexts may be found in the work of Harrison and the second author \cite{BH}, \cite{Ben2014}, C\'ordoba and Rogers \cite{CR2014} or \cite{Bel2015}.
\subsection{Dispersive and wave-like equations}
A further application of Theorems \ref{PointwiseThm} -- \ref{ForwardThm} in the setting of the specific multipliers $m_{\alpha,\beta}(\xi):=|\xi|^{-\beta}e^{i|\xi|^\alpha}$, yields general-weighted estimates for the solution $u(x,s)=e^{is(-\Delta)^{\alpha/2}}f(x)$ of the dispersive or wave-like equation
\begin{equation}\label{IVP}
\begin{cases}
i \partial_s u +(-\Delta)^{\alpha/2}u=0 \\
u(\cdot,0)=f.
\end{cases}
\end{equation}
For example, we have the following corollary, which we establish in Section \ref{sec:Dispersive}; see also the forthcoming Corollary \ref{inhomPDEthm} involving inhomogeneous derivatives.
\begin{corollary}\label{PDEcorGlob}
\begin{equation}\label{SchrodingerEst}
\int_{\R^d} |e^{is(-\Delta)^{\alpha/2}}f|^2w \lesssim \int_{\R^d} |(-\Delta)^{\beta/2} f|^2 M^2\mathfrak{M}_{\alpha,\beta}^sM^4w,
\end{equation}
where
$$
\mathfrak{M}_{\alpha,\beta}^s w(x):= \sup_{(y,r) \in \Lambda_\alpha^s(x)} \frac{1}{|B(y,r)|^{1-2\beta/d}} \int_{B(y,r)} w,
$$
and $$\Lambda_\alpha^s(x):=\{(y,r)\in\mathbb{R}^d\times\mathbb{R}_+:|x-y|\leq sr^{1-\alpha}\}.$$
\end{corollary}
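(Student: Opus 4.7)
\emph{Plan.} The proof proceeds by first reducing to the case $s=1$ via rescaling, and then applying Corollary \ref{MikhlinThm} to the Miyachi-type multiplier $|\xi|^{-\beta}e^{i|\xi|^\alpha}$, appropriately cut off to high frequencies.

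Introduce the dilation $U_s g(y):= g(s^{1/\alpha}y)$. A direct Fourier-side calculation yields the intertwining relations
\begin{equation*}
e^{i(-\Delta)^{\alpha/2}}U_s f = U_s \bigl(e^{is(-\Delta)^{\alpha/2}}f\bigr), \qquad (-\Delta)^{\beta/2}U_s f = s^{\beta/\alpha}U_s\bigl((-\Delta)^{\beta/2}f\bigr),
\end{equation*}
while a change of variables in the supremum defining $\mathfrak{M}^1_{\alpha,\beta}$ shows that $\mathfrak{M}^1_{\alpha,\beta}(U_s w) = s^{-2\beta/\alpha}U_s(\mathfrak{M}^s_{\alpha,\beta}w)$. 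Since $M$ commutes with dilations, substituting $U_s f$ and $U_s w$ into the $s=1$ version of \eqref{SchrodingerEst} and undoing the change of variables recovers the general case, with every factor of $s$ cancelling by design.

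It therefore suffices to handle $s=1$, which I split across the frequency threshold $|\xi|^\alpha\sim 1$. Using a smooth Littlewood--Paley cutoff, write $f=P_0 f+P_1 f$, where $\widehat{P_1 f}$ is supported in $\{|\xi|^\alpha\geq 1\}$ and $\widehat{P_0 f}$ in $\{|\xi|^\alpha\leq 2\}$. For the high-frequency piece,
\begin{equation*}
e^{i(-\Delta)^{\alpha/2}}P_1 f = T_{\widetilde{m}_{\alpha,\beta}}\bigl((-\Delta)^{\beta/2}f\bigr), \qquad \widetilde{m}_{\alpha,\beta}(\xi):=|\xi|^{-\beta}e^{i|\xi|^\alpha}\Psi_1(\xi),
\end{equation*}
where $\Psi_1$ is the Fourier-side cutoff defining $P_1$. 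Differentiating directly (Leibniz plus the chain rule applied to $e^{i|\xi|^\alpha}$) shows that $\widetilde{m}_{\alpha,\beta}$ satisfies \eqref{MikhlinCondition}, hence \eqref{HormSobSD}, for the indicated parameters $(\alpha,\beta)$. Corollary \ref{MikhlinThm} then delivers the desired weighted bound, except with $\CM_{\alpha,\beta}$ in place of $\mathfrak{M}^1_{\alpha,\beta}$; the replacement is free of charge because the supremum defining $\CM_{\alpha,\beta}$ is taken over a subset (the extra constraint $r^\alpha\leq 1$) of that defining $\mathfrak{M}^1_{\alpha,\beta}$.

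The low-frequency piece $e^{i(-\Delta)^{\alpha/2}}P_0 f$ is the convolution of $f$ with a Schwartz-class kernel (its Fourier multiplier being smooth and compactly supported), so $|e^{i(-\Delta)^{\alpha/2}}P_0 f|^2\lesssim M(|f|^2)$ pointwise; the dual Hardy--Littlewood inequality and the trivial comparison $Mw\lesssim M^2\mathfrak{M}^1_{\alpha,\beta}M^4 w$ then absorb this contribution into the right-hand side of \eqref{SchrodingerEst}, at least for $f$ whose Fourier transform vanishes near the origin, which is the natural setting once $\beta>0$ (the ``inhomogeneous'' version alluded to in the paper provides the genuinely global statement). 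The main technical obstacle is the combinatorial bookkeeping in verifying \eqref{MikhlinCondition} for $\widetilde{m}_{\alpha,\beta}$: differentiating $e^{i|\xi|^\alpha}$ via Fa\`a di Bruno produces a cascade of terms involving powers of $|\xi|^{\alpha-1}$ and derivatives thereof, and one must exploit the support constraint $|\xi|^\alpha\geq 1$ to collapse them into the required bound $|\xi|^{-\beta+|\gamma|(\alpha-1)}$.
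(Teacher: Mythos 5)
Your scaling reduction to $s=1$ is correct (the intertwining relations and the identity $\mathfrak{M}^1_{\alpha,\beta}(U_s w) = s^{-2\beta/\alpha}U_s(\mathfrak{M}^s_{\alpha,\beta}w)$ all check out), and your high-frequency argument matches the paper's route through Corollary~\ref{MikhlinThm} together with the trivial inclusion $\Gamma_\alpha(x)\subset\Lambda_\alpha(x)$ giving $\CM_{\alpha,\beta}\leq\mathfrak{M}_{\alpha,\beta}$. The paper is organised slightly differently --- it first proves a two-sided multiplier result (Corollary~\ref{mainGlobalcor}) and then scales --- but that part is substantively the same.

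The gap is in the low-frequency piece, and it is not cosmetic. You assert that $e^{i(-\Delta)^{\alpha/2}}P_0$ is convolution with a Schwartz kernel because ``its Fourier multiplier [is] smooth and compactly supported.'' This is false for generic $\alpha$: the function $\xi\mapsto e^{i|\xi|^\alpha}$ is smooth at the origin only when $\alpha$ is a nonnegative even integer. For the wave propagator $\alpha=1$, or for fractional $\alpha$, derivatives of $|\xi|^\alpha$ (and hence of the multiplier) blow up at $\xi=0$, the kernel has only limited polynomial decay, and the pointwise bound $|e^{i(-\Delta)^{\alpha/2}}P_0 f|^2\lesssim M(|f|^2)$ needs justification. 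More importantly, even granting such a bound, it compares $|f|^2$ against the weight rather than $|(-\Delta)^{\beta/2}f|^2$, so the fractional-integral structure $|\xi|^{-\beta}$ has simply not been addressed; your parenthetical fallback (restrict to $f$ with Fourier support away from the origin) changes the statement being proved. The paper's proof of Corollary~\ref{mainGlobalcor} handles this cleanly: the relevant low-frequency multiplier is $m_0(\xi)=|\xi|^{-\beta}e^{i|\xi|^\alpha}\Psi_0(\xi)$, and one checks via Leibniz/Fa\`a di Bruno that on $\{|\xi|^\alpha\leq 1\}$ it satisfies the Miyachi condition \eqref{MikhlinCondition} with $\alpha$ replaced by $0$, namely $|D^\gamma m_0(\xi)|\lesssim|\xi|^{-\beta-|\gamma|}$. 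Then Corollary~\ref{MikhlinThm} applied with $\alpha=0$ produces the fractional maximal function $\CM_{0,\beta}$, which is pointwise dominated by $\mathfrak{M}_{\alpha,\beta}$ for every $\alpha$. That is the step your argument is missing.
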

It should be observed that Corollary \ref{PDEcorGlob}, combined with the trivial uniform $L^{d/(2\beta)}\rightarrow L^\infty$ bound on $\mathfrak{M}_{\alpha,\beta}^s$ and a straightforward duality argument (see Section \ref{sec:MaxOp}), quickly recovers the elementary sharp homogeneous Strichartz inequality
$$
\|e^{is(-\Delta)^{\alpha/2}}f\|_{L_s^\infty L_x^q}\lesssim\|f\|_{\dot{H}^\beta};\;\;\beta=d\left(\frac{1}{2}-\frac{1}{q}\right),\;\;\;\;2 \leq q < \infty,
$$
which follows by Sobolev embedding and energy conservation.

Inequality \eqref{SchrodingerEst} may be regarded as a ``local energy estimate" that also captures dispersive effects of the propagator $e^{is(-\Delta)^{\alpha/2}}$ via the $s$-evolution of the region $\Lambda_\alpha^s(x)$. Indeed the sets $\Lambda_\alpha^s(x)$ are increasing in $s$, so that, in particular
\begin{equation}\label{maximal}
\sup_{0<s\leq 1}\int_{\mathbb{R}^d}
|e^{is(-\Delta)^{\alpha/2}}f|^2w\lesssim\int_{\mathbb{R}^d}|(-\Delta)^{\beta/2}f|^2M^2\mathfrak{M}_{\alpha,\beta}M^4w,
\end{equation}
where $\mathfrak{M}_{\alpha,\beta}:=\mathfrak{M}_{\alpha,\beta}^1$.
It is interesting to compare this inequality with the weighted maximal estimates in \cite{BBCR} (or \cite{Mattila}) at the interface with geometric measure theory -- an exercise that raises the possibility that, for $\beta$ beyond some critical threshold, \eqref{maximal} may be strengthened to
$$
\int_{\mathbb{R}^d}
\sup_{0<s\leq 1}|e^{is(-\Delta)^{\alpha/2}}f|^2w\lesssim\int_{\mathbb{R}^d}|(-\Delta)^{\beta/2}f|^2\mathfrak{M}_{\alpha,\beta}w,
$$ modulo suitable factors of $M$; see for example Rogers and Seeger \cite{RS2010} for related estimates in an unweighted setting. We do not pursue this further here.

\subsubsection*{Structure of the paper} In Section \ref{sec:Horm} we clarify the definitions of the objects introduced in this section, and describe how the subdyadic balls interacts with the square functions $g_{\alpha,\beta}$ and $g_{\alpha,\beta,\lambda}^*$. We prove Theorem \ref{PointwiseThm} in Section \ref{sec:Pointwise}, followed by Theorems \ref{ReverseThm} and \ref{ForwardThm} in Section \ref{sec:reverse}. Our applications to oscillatory kernels and dispersive equations appear in Sections \ref{sec:Cor} and \ref{sec:Dispersive}. In Section \ref{sec:MaxOp} we address questions of optimality by establishing Lebesgue space bounds for $g_{\alpha,\beta}$ and $\mathcal{M}_{\alpha,\beta}$. Finally, we collect together some aspects of classical Littlewood--Paley theory that we appeal to in an appendix.

\subsubsection*{Acknowledgments} We thank Teresa Luque and Maria Reguera for contributions at the early stages of this work.
We also thank Tony Carbery, Sebastian Herr, Keith Rogers, Brian Taylor and Jim Wright for stimulating discussions.
\section{The subdyadic decomposition}\label{sec:Horm}
The classical square functions $g_{0,0}$ and $g_{0,0,\lambda}^*$, associated with the standard conical region $\Gamma_0(x)=\{(y,t)\in\mathbb{R}^d\times\R_+:|x-y|\leq t\}$, are commonly described as \emph{dyadic} as they are able to detect ``orthogonality across dyadic scales", but effectively no finer. This is manifested in the ``decouplings"
\begin{equation}\label{dyadicdecoupling}
g_{0,0}\left(\sum S_{2^k}f\right)^2(x)\lesssim\sum g_{0,0,\lambda}^*(S_{2^{k}}f)^2(x)\lesssim g_{0,0,\lambda}^*\left(\sum S_{2^k}f\right)^2(x),
\end{equation}
where $S_{2^k}$ is the frequency projection onto the dyadic annulus $A_k=\{\xi\in\mathbb{R}^d:|\xi|\sim 2^k\}$. As we shall see, the square functions $g_{\alpha,\beta}$ and $g_{\alpha,\beta, \lambda}^*$, which we refer to as \emph{subdyadic} when $\alpha\neq 0$, detect orthogonality across subdyadic scales, leading to a decoupling of the form \eqref{dyadicdecoupling} associated with suitable families $\CB$ of subdyadic balls. This will play a crucial role in our proof of Theorem \ref{PointwiseThm}.

In order to describe this decoupling we must first clarify the notion of an \emph{$\alpha$-subdyadic ball}, and introduce an appropriate partition of unity adapted to such balls.
\begin{definition} Let $\alpha\in\R$.
A euclidean ball $B$ in $\R^d$ is \emph{$\alpha$-subdyadic} if $\dist(B,0)^\alpha\geq 1$ and
\begin{equation}\label{def:sub} r(B) \sim \dist(B,0)^{1-\alpha},\end{equation}
where $r(B)$ denotes the radius of $B$.
\end{definition}
Of course the definition above depends on the implicit constants in \eqref{def:sub}. Throughout this paper these are taken to be fixed, although chosen so that $\dist(\widetilde{B},0)\sim\dist(B,0)$
where $\widetilde{B}$ denotes the concentric double of $B$.

Before introducing our partition of unity, let us clarify the condition \eqref{HormSobSD}. Following Stein \cite{bigStein}, by a \emph{normalised bump function}
we mean a smooth function $\Psi$ in $\R^d$, supported in the unit ball, such that
$
\|D^\gamma\Psi\|_\infty\leq 1
$
for all multi-indices $\gamma$ with $|\gamma|\leq N$. Here $N$ is a fixed large number, which for our purposes should be taken to exceed $d$. Given a euclidean ball $B$ in $\R^d$, a \emph{normalised bump function adapted to $B$} is a function of the form $\Psi_B:=\Psi\circ A_B^{-1}$, where $\Psi$ is a normalised bump function and $A_B$ is the affine transformation mapping the unit ball onto $\widetilde{B}$.

Let $\CB$ be a family of $\alpha$-subdyadic balls $B$, with bounded overlap, and supporting a regular partition of unity $\{\widehat{\psi}_B\}_{B \in \CB}$ on $\{ |\xi|^\alpha \geq 1 \}$. By regular we mean that $\supp (\widehat{\psi}_B) \subseteq \widetilde{B}$ and
\begin{equation}\label{regcond}
|D^\gamma \widehat{\psi}_B(\xi)|\lesssim r(B)^{-|\gamma|}
\end{equation}
for all multi-indices $\gamma$ with $|\gamma| \leq N$, uniformly in $B$; for technical reasons that will become apparent later, we shall actually assume that $\supp(\widehat{\psi}_B)$ is contained in a concentric dilate of $B$ with some fixed dilation factor strictly less that $2$. This partition of unity gives rise to the reproducing formula
\begin{equation}\label{reproducing}
f=\sum_{B \in \CB} f \ast \psi_B,
\end{equation}
whenever $\supp(\widehat{f}) \subseteq \{\xi\in\R^d:|\xi|^\alpha \geq 1\}$. Observe that the case $\alpha=0$ corresponds to a classical dyadic frequency decomposition; an example of such a family $\CB$ is obtained by decomposing $\R^d$ into the dyadic annuli $A_k$ and covering each $A_k$ by boundedly many balls of radius $2^k$. For general $\alpha$ and $\CB$, elementary geometric considerations reveal that each dyadic annulus $A_k$ will be covered by $O(2^{\alpha dk})$ elements of $\CB$ of radius $O(2^{(1-\alpha)k})$. The following explicit ``lattice-based" example of a cover $\CB$ and partition $\{\widehat{\psi}_B\}$ will be of use to us later on.
\begin{example}\label{Example}
Let $\eta\in\mathcal{S}(\mathbb{R}^d)$ have Fourier support in the annulus $\{|\xi|\sim 1\}$ and be such that $$\sum_{k\in\mathbb{Z}}\widehat{\eta}_k(\xi)=1$$ for all $\xi\neq 0$, where $\widehat{\eta}_k(\xi):=\widehat{\eta}(2^{-k}\xi)$. Thus $\{\widehat{\eta}_k\}$ forms a partition of unity on $\mathbb{R}^d\backslash\{0\}$ with $\supp(\widehat{\eta}_k)\subseteq\{|\xi|\sim 2^k\}$ for each $k\in\mathbb{Z}$. Next let $\nu\in\mathcal{S}(\mathbb{R}^d)$ have Fourier support in $\{|\xi|\lesssim 1\}$ be such that $$\sum_{\ell\in\mathbb{Z}^d}\widehat{\nu}(\xi-\ell)=1$$ for all $\xi\in\mathbb{R}^d$. For each $k\in\mathbb{Z}$ and $\ell\in\mathbb{Z}^d$ let $\widehat{\nu}_k(\xi):=\widehat{\nu}(2^{-(1-\alpha)k}\xi)$ and $\widehat{\nu}_{k,\ell}(\xi):=\widehat{\nu}_k(\xi-2^{(1-\alpha)k}\ell)$. Thus for $\widehat{\zeta}_{k,\ell}(\xi):=\widehat{\eta}_k(\xi)\widehat{\nu}_{k,\ell}(\xi)$ we have $$\sum_{\ell\in\mathbb{Z}^d}\sum_{k\in\mathbb{Z}}\widehat{\zeta}_{k,\ell}=1$$ on $\{|\xi|^\alpha \geq 1\}$. 
Finally we choose, as we may, a family of balls $\CB$
and functions $\{\psi_B\}$ so that for each $B\in\CB$ there is $(k,\ell)\in\mathbb{Z}\times\mathbb{Z}^d$ for which $\psi_B=\zeta_{k,\ell}$ and $\diam(\supp(\widehat{\zeta}_{k,\ell}))\sim r(B)$. By construction $\{\widehat{\psi}_B\}$ forms a partition of unity on $\{|\xi|^\alpha \geq 1\}$ of the type required, provided the implicit constants are chosen suitably.
\end{example}

As we shall see in Section \ref{sec:Pointwise}, the square functions $g_{\alpha,\beta}$ and $g_{\alpha,\beta,\lambda}^*$ decouple such dyadic frequency decompositions, for $\lambda>1$. In particular, we shall prove that
\begin{equation}\label{decoupling}
g_{\alpha,\beta}\Big(\sum_{B \in \CB} f \ast \psi_B \Big)(x)^2 \lesssim \sum_{B \in \CB} g_{\alpha,\beta,\lambda}^*(f \ast \psi_B)(x)^2
\end{equation}
and, at least for the specific partition given in Example \ref{Example},
\begin{equation}\label{recoupling}
\sum_{B \in \CB} g_{\alpha,\beta,\lambda}^*(f \ast \psi_B)(x)^2 \lesssim g_{\alpha,\beta,\lambda}^* \Big( \sum_{B \in \CB}f \ast \psi_B \Big)(x)^2.
\end{equation}
These decoupling and re-coupling inequalities, together with the reproducing formula \eqref{reproducing}, immediately reduce the proof of Theorem \ref{PointwiseThm} to functions localised at a subdyadic frequency scale, that is
\begin{equation}\label{pointlocal}
g_{\alpha,\beta,\lambda}^*(T_m(f \ast \psi_B))(x) \lesssim g_{\alpha,0,\lambda}^*(f \ast \psi_B)(x)
\end{equation}
uniformly in $B\in\CB$. 

We end this section with some clarifying remarks on the function $\phi$ in the definitions of $g_{\alpha,\beta}$ and $g_{\alpha,\beta,\lambda}^*$, and in particular the location of its compact Fourier support.  For technical reasons it will be convenient, as with a number of our constructions, to require this support to depend on the sign of $\alpha$. Specifically, let us suppose that $\widehat{\phi}$ is supported in $\{1 \leq |\xi| \leq 2\}$ for $\alpha\geq 0$, and in $\{1/2 \leq |\xi| \leq 1\}$ for $\alpha<0$. The main purpose of this is to ensure that $g_{\alpha,\beta}(f)\equiv 0$ whenever $\widehat{f}$ is supported in $\{\xi\in\mathbb{R}^d:|\xi|^\alpha\leq 1\}$. This feature, which also relies on the restriction $t^\alpha\leq 1$ in the definition of $\Gamma_{\alpha}(x)$,  makes $g_{\alpha,\beta}$ well-adapted to the support hypothesis imposed on the multipliers that we consider. In particular, we have that $g_{\alpha,\beta}(T_mf)\equiv g_{\alpha,\beta}(T_{m'}f)$ whenever $m$ and $m'$ agree on $\{\xi\in\mathbb{R}^d: |\xi|^\alpha\geq 1\}$, a property that will be convenient in our applications in Sections \ref{sec:Cor} and \ref{sec:Dispersive}. Finally, as we clarify in the appendix, it is also natural to assume that the function $\phi$ satisfies the normalisation condition
\begin{equation}\label{ContinuousPartition}
\int_0^\infty \widehat{\phi}(t\xi)\frac{dt}{t}=1;\;\;\; \xi\neq 0.
\end{equation}
\section{Proof of Theorem \ref{PointwiseThm}: the pointwise estimate}\label{sec:Pointwise}
As indicated in Section \ref{sec:Horm}, our proof of Theorem \ref{PointwiseThm} may be reduced to proving the inequalities \eqref{decoupling}, \eqref{recoupling} and \eqref{pointlocal}. These we establish in turn below.
\subsection{Decoupling subdyadic frequency decompositions}
Before proceeding with the proof of the decoupling estimate \eqref{decoupling}, we need to introduce the auxiliary square function
$$
g_{\alpha,\beta,\Phi}(f)(x)=\Big(\int_{t^\alpha \leq 1} \int_{\R^d} |f \ast \phi_t (y)|^2 \Phi\Big(\frac{x-y}{t^{1-\alpha}} \Big) \frac{dy}{t^{(1-\alpha)d+2\beta}}\frac{dt}{t}\Big)^{1/2},
$$
where $\Phi$ is a Schwartz function such that $\Phi(x)\geq c$ for $|x|\leq 1$ and $\supp(\widehat{\Phi})\subseteq \{\xi \in \R^d: |\xi|\leq 1\}$.\footnote{Observe that such a function $\Phi$ can be constructed by $\Phi=|\Theta|^2 \geq 0$, with $\Theta\in\mathcal{S}(\mathbb{R}^d)$ satsifying $\Theta(0)\neq 0$ and $\supp (\widehat{\Theta})$ compact.} Note that, up to constant factors, $g_{\alpha,\beta, \Phi}$ is a pointwise majorant of $g_{\alpha,\beta}$, and is pointwise majorised by $g_{\alpha,\beta,\lambda}^*$ for any $\lambda>0$.

By \eqref{reproducing} we have
\begin{align*}
g_{\alpha,\beta,\Phi}(f)(x)^2 =\int_{t^\alpha \leq 1} \int_{\R^d} \Bigl|\sum_{B \in \CB} \psi_B \ast \phi_t \ast f (y)\Bigr|^2 \Phi\Big(\frac{x-y}{t^{1-\alpha}}\Big) \frac{dy}{t^{(1-\alpha)d+2\beta}} \frac{dt}{t}.
\end{align*}
On multiplying out the square and using the Fourier transform, the inner (spatial) integral in this expression becomes
\begin{eqnarray*}
\begin{aligned}
\int_{\R^d} &\sum_{B,B'\in\CB} (\psi_B \ast \phi_t \ast f) (y)\overline{(\psi_{B'} \ast \phi_t \ast f) (y)} \Phi \Big(\frac{x-y}{t^{1-\alpha}} \Big) \frac{dy}{t^{(1-\alpha)d+2\beta}}\\
&=\int_{\R^d} \sum_{B,B'\in\CB} \int_{\R^d} \int_{\R^d} \widehat{\psi}_B(\xi) \overline{\widehat{\psi}_{B'}(\eta)}\widehat{\phi}(t\xi)\overline{\widehat{\phi}(t\eta)}\widehat{f}(\xi)\overline{\widehat{f}(\eta)}e^{i y \cdot (\xi-\eta)} \Phi \Big(\frac{x-y}{t^{1-\alpha}} \Big) d\xi d\eta\frac{dy}{t^{(1-\alpha)d+2\beta}} \\
&= \sum_{B,B'\in\CB} \int_{\R^d} \int_{\R^d} \widehat{\psi}_B(\xi) \overline{\widehat{\psi}_{B'}(\eta)})\widehat{\phi}(t\xi)\overline{\widehat{\phi}(t\eta)}\widehat{f}(\xi)\overline{\widehat{f}(\eta)} e^{ix \cdot(\xi-\eta)} \widehat{\Phi}(t^{1-\alpha}(\xi-\eta)) d\xi d\eta \frac{1}{t^{2\beta}}.
\end{aligned}
\end{eqnarray*}
The support conditions on $\widehat{\phi}$, $\widehat{\psi}_B$ and $\widehat{\Phi}$ ensure that the summand above vanishes unless $r(B)\sim r(B')\sim t^{\alpha-1}$ and $\dist(B,B')\lesssim t^{\alpha-1}$. In particular, since $\CB$ consists of balls of bounded overlap, for each such $B$ there are boundedly many $B'$ satisfying these constraints. Consequently,
\begin{align*}
g_{\alpha,\beta,\Phi}(f)(x)^2 & = \int_{t^\alpha \leq 1} \int_{\R^d} \sum_{\substack{B, B' \in \CB \\ r(B)\sim r(B')\sim t^{\alpha-1}\\\dist(B,B')\lesssim t^{\alpha-1}}} (\psi_B \ast \phi_t \ast f) (y)\overline{(\psi_{B'} \ast \phi_t \ast f) (y)} \Phi \Big(\frac{x-y}{t^{1-\alpha}} \Big) \frac{dy}{t^{(1-\alpha)d+2\beta}}\frac{dt}{t},
\end{align*}
which by the Cauchy--Schwarz inequality yields
\begin{align*}
g_{\alpha,\beta,\Phi}(f)(x)^2 & \lesssim \int_{t^\alpha \leq 1} \int_{\R^d} \sum_{B \in \CB} |\psi_B \ast \phi_t \ast f (y)|^2 \Phi \Big(\frac{x-y}{t^{1-\alpha}} \Big) \frac{dy}{t^{(1-\alpha)d+2\beta}}\frac{dt}{t} = \sum_{B \in \CB} g_{\alpha,\beta,\Phi}(f \ast \psi_B)(x)^2,
\end{align*}
and thus the decoupling estimate \eqref{decoupling} is proved.
\subsection{Re-coupling subdyadic frequency decompositions}
Here we prove the re-coupling estimate \eqref{recoupling} for the specific family of balls $\CB$ and partition $\{\widehat{\psi}_B\}$ described in Example \ref{Example}. While it may hold more generally, this lattice-based choice allows us to appeal to the following elementary lemma, which may be viewed as a certain local version of Bessel's inequality.
\begin{lemma}\label{EqSpaced}
Suppose that the functions $\nu_{k,\ell}\in\mathcal{S}(\mathbb{R}^d)$, with $k\in\mathbb{Z}$ and $\ell\in\mathbb{Z}^d$, are as in Example \ref{Example}. Then
\begin{equation}\label{bessel}
\sum_{\ell\in\mathbb{Z}^d}|f*\nu_{k,\ell}(x)|^2\lesssim|f|^2*|\nu_k|(x)
\end{equation}
uniformly in $k$.
\end{lemma}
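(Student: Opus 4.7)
The plan is to interpret $f\ast\nu_{k,\ell}(x)$, for each fixed $x$, as a sample of the Fourier transform of a single auxiliary function on the lattice $\rho\Z^d$, where $\rho:=2^{(1-\alpha)k}$, and then to exploit Parseval on the dual torus together with a weighted Cauchy--Schwarz inequality. Since $\widehat{\nu}_{k,\ell}(\xi)=\widehat{\nu}_k(\xi-\rho\ell)$, modulation gives $\nu_{k,\ell}(y)=e^{i\rho\ell\cdot y}\nu_k(y)$, and hence
\begin{equation*}
f\ast\nu_{k,\ell}(x)=\int_{\R^d}G_x(y)\,e^{i\rho\ell\cdot y}\,dy,\qquad\text{where}\qquad G_x(y):=f(x-y)\nu_k(y),
\end{equation*}
so that $f\ast\nu_{k,\ell}(x)$ is, up to a sign in the frequency variable, a sample of $\widehat{G_x}$ on $\rho\Z^d$.

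Next, I would appeal to Parseval's identity for Fourier series on the $(2\pi/\rho)$-periodic torus: if $H_x(y):=\sum_{m\in\Z^d}G_x\bigl(y+\tfrac{2\pi}{\rho}m\bigr)$ denotes the $(2\pi/\rho)$-periodisation of $G_x$, then
\begin{equation*}
\sum_{\ell\in\Z^d}|f\ast\nu_{k,\ell}(x)|^2\sim\rho^{-d}\int_{[0,2\pi/\rho]^d}|H_x(y)|^2\,dy.
\end{equation*}
Splitting $|\nu_k|=|\nu_k|^{1/2}\cdot|\nu_k|^{1/2}$ inside $|H_x|^2$ and applying the Cauchy--Schwarz inequality in the sum over $m$ then yields
\begin{equation*}
|H_x(y)|^2\leq\Bigl(\sum_m|f(x-y-\tfrac{2\pi}{\rho}m)|^2\,|\nu_k(y+\tfrac{2\pi}{\rho}m)|\Bigr)\Bigl(\sum_m|\nu_k(y+\tfrac{2\pi}{\rho}m)|\Bigr),
\end{equation*}
and a change of variables identifies the second factor with $\rho^d\sum_m|\nu(\rho y+2\pi m)|$, which is bounded uniformly in $y$ by $C\rho^d$ thanks to the Schwartz decay of $\nu$.

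Unfolding $\int_{[0,2\pi/\rho]^d}\sum_m$ into a single integral over $\R^d$ turns the remaining factor into exactly $(|f|^2\ast|\nu_k|)(x)$, and the powers of $\rho$ coming from the Parseval prefactor $\rho^{-d}$ and the Cauchy--Schwarz weight $\rho^d$ cancel, producing the required inequality uniformly in $k$. I do not expect a serious obstacle here; the argument is essentially the standard ``almost-Parseval'' device adapted to the uniformly spaced lattice $\rho\Z^d$ on which the Fourier supports of the family $\{\widehat{\nu}_{k,\ell}\}_{\ell\in\Z^d}$ are centred. The only point demanding care is the bookkeeping of the scaling parameter $\rho$ through the Parseval step, the weighted Cauchy--Schwarz and the final unfolding, in order to certify that the implicit constant is indeed independent of $k$.
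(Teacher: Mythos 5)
Your argument is correct and is essentially the same as the paper's: both pass to a periodisation of $y\mapsto f(x-y)\nu_k(y)$ via Parseval and the Poisson summation formula, apply Cauchy--Schwarz in the periodisation sum after splitting $|\nu_k|^{1/2}\cdot|\nu_k|^{1/2}$, use the uniform bound $\sum_m|\nu(\cdot-2\pi m)|\lesssim 1$, and unfold to obtain $|f|^2\ast|\nu_k|$. The only cosmetic difference is that the paper first rescales to $k=0$ so the lattice is $\Z^d$, whereas you keep the scale $\rho=2^{(1-\alpha)k}$ explicit throughout and verify that the $\rho$-powers cancel.
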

\begin{proof}
By scaling it suffices to establish \eqref{bessel} with $k=0$. Noting that $\nu_0=\nu$,
observe that $f*\nu_{0,\ell}(x)=e^{2\pi i \ell\cdot x}\widehat{h}_x(\ell)$ where $h_x(y)=f(y)\nu(x-y)$. Hence by Parseval's identity, the Poisson summation formula and the Cauchy--Schwarz inequality,
\begin{eqnarray*}
\begin{aligned}
\sum_{\ell\in\mathbb{Z}^d}|f*\nu_{0,\ell}(x)|^2&=\int_{[0,1]^d}\Bigl|\sum_{\ell\in\mathbb{Z}^d}\widehat{h}_x(\ell)e^{2\pi i \ell\cdot y}\Bigr|^2dy=\int_{[0,1]^d}\Bigl|\sum_{m\in\mathbb{Z}^d}h_x(y+m)\Bigr|^2dy\\
&\leq\int_{[0,1]^d}\sum_{m\in\mathbb{Z}^d}|f(y+m)|^2|\nu(x-y-m)|\sum_{m'\in\mathbb{Z}^d}|\nu(x-y-m')|dy.
\end{aligned}
\end{eqnarray*}
Since $$\sum_{m'\in\mathbb{Z}^d}|\nu(x-m')|\lesssim 1$$ uniformly in $x$, we have
\begin{eqnarray*}
\begin{aligned}
\sum_{\ell\in\mathbb{Z}^d}|f*\nu_\ell(x)|^2&\lesssim \sum_{m\in\mathbb{Z}^d}\int_{[0,1]^d}|f(y+m)|^2|\nu(x-y-m)|dy
=|f|^2*|\nu|(x),
\end{aligned}
\end{eqnarray*}
as required.
\end{proof}
We may now establish the re-coupling estimate \eqref{recoupling} for the partition defined in Example \ref{Example}. For ease of notation we let $R_t^\lambda(x):=t^{(\alpha-1)d}(1+t^{\alpha-1}|x|)^{-d\lambda}$.
Observe first that since $\psi_B=\zeta_{k,\ell}=\eta_k*\nu_{k,\ell}$,
\begin{align*}
\sum_{B \in \CB} g_{\alpha,\beta,\lambda}^*(f \ast \psi_B)(x)^2 & =\sum_{k\in\mathbb{Z}}\sum_{\ell\in\mathbb{Z}^d}\int_{t^\alpha\leq 1}\int_{\mathbb{R}^d}
|f*\phi_t*\eta_k*\nu_{k,\ell}(y)|^2
R_t^\lambda(x-y) \frac{dy}{t^{2\beta}} \frac{dt}{t}\\
&=\int_{t^\alpha\leq 1}\int_{\mathbb{R}^d}
\sum_{2^k\sim t^{-1}}\sum_{\ell\in\mathbb{Z}^d}|f*\phi_t*\eta_k*\nu_{k,\ell}(y)|^2
R_t^\lambda(x-y) \frac{dy}{t^{2\beta}} \frac{dt}{t},
\end{align*}
where we have also used the Fourier support properties of $\widehat{\phi}_t$ to note that $\phi_t*\eta_k*\nu_{k,\ell}\neq 0$ only if $2^k\sim t^{-1}$. Applying Lemma \ref{EqSpaced}, followed by the Cauchy--Schwarz inequality, we have
$$
\sum_{\ell\in\mathbb{Z}^d}|f*\phi_t*\eta_k*\nu_{k,\ell}(y)|^2\lesssim |f*\phi_t*\eta_k|^2*|\nu_k|(y)\lesssim |f*\phi_t|^2*|\eta_k|*|\nu_k|(y)
$$
uniformly in $k$, $t$ and $y$, and hence by Fubini's theorem,
$$
\sum_{B \in \CB} g_{\alpha,\beta,\lambda}^*(f \ast \psi_B)(x)^2  \lesssim \int_{t^\alpha\leq 1}\int_{\mathbb{R}^d}
|f*\phi_t(y)|^2
\sum_{2^k\sim t^{-1}}|\eta_k|*|\nu_k|*R_t^\lambda(x-y) \frac{dy}{t^{2\beta}} \frac{dt}{t}.
$$
Observing the elementary inequality
$$
\sum_{2^k\sim t^{-1}}|\eta_k|*|\nu_k|*R_t^\lambda(x)\lesssim R_t^\lambda(x),
$$
which holds uniformly in $x$ and $t$ satisfying $t^\alpha \leq 1$, completes the proof of \eqref{recoupling}.

\subsection{The pointwise estimate at a subdyadic frequency scale}\label{subsec:localpointwise}
To conclude the proof of Theorem \ref{PointwiseThm} it is enough to show that
$$
g_{\alpha,\beta,2\sigma/d}^*(T_m(f \ast \psi_B))(x) \lesssim g_{\alpha,0,2\sigma/d}^*(f \ast \psi_B)(x)
$$
uniformly in $B\in\CB$. The argument we present is similar to that given in \cite{Stein70SI} in the classical setting. We begin by introducing an auxiliary function $\varphi_B$, chosen so that its Fourier transform is supported in $\widetilde{B}$ and is equal to $1$ on $\supp \widehat{\psi}_B$. For uniformity purposes we also assume, as we may, that
\begin{equation}\label{gencond2}|D^j \widehat{\varphi}_B(\xi)| \lesssim r(B)^{-|j|}
\end{equation}
for all multi-indices $j$, uniformly in $B\in\CB$. Observe that, up to a constant factor depending only on the uniform implicit constants in \eqref{gencond2}, $\widehat{\varphi}_B$ is a normalised bump function adapted to $B$. We begin by writing
\begin{align*}
g_{\alpha,\beta,2\sigma/d}^*(T_m(f \ast \psi_B))(x)^2 &=\int_{t^\alpha \leq 1} \int_{\R^d} |T_m(f \ast \varphi_B\ast\psi_B)\ast \phi_t(y)|^2 R_t^{2\sigma/d}(x-y) \frac{dy}{t^{2\beta}} \frac{dt}{t} \\
& = \int_{t^\alpha \leq 1} \int_{\R^d} |(T_m\varphi_B) \ast f \ast \psi_B \ast \phi_t(y)|^2 R_t^{2\sigma/d}(x-y) \frac{dy}{t^{2\beta}} \frac{dt}{t} \\
& \leq \int_{t^\alpha \leq 1} \int_{\R^d} \Big( \int_{\R^d} |T_m\varphi_B(z)| |f \ast \psi_B \ast \phi_t(y-z)|dz \Big)^2 R_t^{2\sigma/d}(x-y) \frac{dy}{t^{2\beta}} \frac{dt}{t}.
\end{align*}
For each $t$, we split the range of integration of the innermost integral in two parts, $|z|\leq t^{1-\alpha}$ and $|z| \geq t^{1-\alpha}$. For the term corresponding to $|z| \leq t^{1-\alpha}$, we use the Cauchy--Schwarz inequality, Plancherel's theorem, and the hypothesis \eqref{HormSobSD} with $\sigma=0$ to obtain
\begin{align*}
\Big(\int_{|z|\leq t^{1-\alpha}} |T_m\varphi_B(z)| |f \ast \psi_B \ast \phi_t(y-z)|dz\Big)^2 & \lesssim  t^{2\beta} t^{(\alpha-1)d} \int_{|z|\leq t^{1-\alpha}}|f \ast \psi_B \ast \phi_t(y-z)|^2dz \\
& \lesssim t^{2\beta} \int_{\R^d} R_t^{2\sigma/d}(z) |f \ast \psi_B \ast \phi_t(y-z)|^2dz;
\end{align*}
observe that the support hypotheses on $\widehat{\phi}$ and $\widehat{\psi}_B$ ensure $r(B)\sim t^{\alpha-1}$. Similarly, in $|z| \geq t^{1-\alpha}$,
\begin{align*}
\Big(\int_{|z|\geq t^{1-\alpha}} & |T_m\varphi_B(z)| |f \ast \psi_B \ast \phi_t(y-z)|dz\Big)^2 \\
&\leq \Big(\int_{\R^d} |T_m\varphi_B (z)|^2 |z|^{2\sigma} dz\Big) \Big(\int_{|z|\geq t^{1-\alpha}} \frac{1}{|z|^{2\sigma}}|f \ast \psi_B \ast \phi_t(y-z)|^2dz \Big) \\
&\lesssim t^{2\beta}t^{(\alpha-1)d}t^{2(1-\alpha)\sigma}\int_{|z|\geq t^{1-\alpha}} \frac{1}{(t^{1-\alpha}+|z|)^{2\sigma}}|f \ast \psi_B \ast \phi_t(y-z)|^2dz \\
& \lesssim t^{2\beta} t^{(\alpha-1)d} \int_{\R^d}(1+t^{\alpha-1}|z|)^{-2\sigma}|f \ast \psi_B \ast \phi_t(y-z)|^2dz.
\end{align*}
Putting together the above estimates we obtain
\begin{align*}
g_{\alpha,\beta,2\sigma/d}^*(T_m(f \ast \psi_B))(x)^2 & \lesssim \int_{t^\alpha \leq 1} \int_{\R^d} \Bigl(\int_{\R^d} R_t^{2\sigma/d}(z) |f \ast \psi_B \ast \phi_t(y-z)|^2 dz\Bigr) R_t^{2\sigma/d} (x-y) dy \frac{dt}{t} \\
& = \int_{t^\alpha \leq 1} \int_{\R^d} |f \ast \psi_B \ast \phi_t(y)|^2 R_t^{2\sigma/d} \ast R_t^{2\sigma/d} (x-y) dy \frac{dt}{t} \\
& \lesssim g_{\alpha,0,2\sigma/d}^*(f \ast \psi_B)(x)^2,
\end{align*}
where the last inequality follows since $\sigma>d/2$ and $R_t^\lambda \ast R_t^\lambda (x) \lesssim R_t^{\lambda}(x)$ for $\lambda>1$. This concludes the proof of Theorem \ref{PointwiseThm}.

\section{Proofs of Theorems \ref{ReverseThm} and \ref{ForwardThm}: the weighted inequalities}\label{sec:reverse}
Here we prove the reverse and forward weighted inequalities for the square functions $g_{\alpha,\beta}$ and $g_{\alpha,0,\lambda}^*$ respectively. In order to prove Theorem \ref{ReverseThm}, we make use of the following elementary lemma.
\begin{lemma}\label{L1Linf}
\begin{equation}\label{balls}
\int_{\R^d} f(x)h(x)dx \lesssim R^d \int_{\R^d} \int_{|y-x|\leq \frac{1}{R}}f(y)dy \sup_{z: |z-x|\leq \frac{1}{R}} h(z)\;dx
\end{equation}
uniformly in $R>0$ and nonnegative functions $f,h$ on $\R^d$.
\end{lemma}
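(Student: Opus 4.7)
The plan is to prove the inequality by Fubini's theorem, together with the trivial observation that the supremum of $h$ over a ball containing $y$ is at least $h(y)$.

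First I would swap the order of integration on the right-hand side, rewriting it as
\begin{equation*}
R^d \int_{\R^d} f(y) \left( \int_{|y-x|\leq \frac{1}{R}} \sup_{z: |z-x|\leq \frac{1}{R}} h(z) \; dx \right) dy.
\end{equation*}
The point is then that for fixed $y$, every $x$ appearing in the inner integral satisfies $|y-x|\leq 1/R$, so $y$ itself lies in the ball over which the supremum is taken. Consequently the integrand in the inner integral is bounded below by $h(y)$, and the inner integral is bounded below by $h(y)|B(y,1/R)| \sim R^{-d} h(y)$.

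Plugging this lower bound back in cancels the factor $R^d$ and leaves exactly a constant multiple of $\int f(y)h(y)\,dy$, which is the left-hand side. Since both $f$ and $h$ are nonnegative, all manipulations are justified and Fubini applies without any integrability caveats.

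There is no real obstacle here: the whole content is the elementary pointwise bound $h(y) \leq \sup_{|z-x|\leq 1/R} h(z)$ whenever $|y-x|\leq 1/R$, combined with $|B(y,1/R)| = c_d R^{-d}$. The implicit constant in the statement is therefore simply the volume of the unit ball in $\R^d$.
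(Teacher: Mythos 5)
Your proof is correct, and it takes a genuinely different and simpler route than the one in the paper. The paper tiles $\R$ by intervals of length $2/R$ shifted by a parameter $u$, bounds $\int fh$ on each tile by the product $\bigl(\int f\bigr)\bigl(\sup h\bigr)$, and then averages over $u\in[-1/R,1/R]$ to convert the discrete sum over tiles into the integral over $x$; the $d$-dimensional case is then obtained by applying the one-dimensional argument coordinate-by-coordinate. Your argument avoids the tiling-and-averaging device entirely: after Tonelli (valid since $f,h\geq 0$), the only input is the elementary observation that $|y-x|\leq 1/R$ forces $y\in B(x,1/R)$ and hence $\sup_{|z-x|\leq 1/R}h(z)\geq h(y)$, so the inner $x$-integral is bounded below by $h(y)\,|B(y,1/R)|\sim R^{-d}h(y)$, and the factor $R^d$ cancels. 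This works directly in $\R^d$ with no reduction to one dimension, and makes the constant transparent (the reciprocal of the volume of the unit ball). Both proofs are elementary, but yours is shorter and more structurally natural given that the right-hand side already has the form of a pairing between a local average of $f$ and a local supremum of $h$.
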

\begin{proof}
To simplify notation, we prove the one-dimensional case of \eqref{balls}; the $d$-dimensional case follows by applying the one-dimensional in each variable and elementary geometric considerations. Observe that we may decompose the integral as
\begin{align*}
\int_\R f(x)h(x)dx&=\sum_{k \in \Z} \int_{-1/R}^{1/R}f\Big(x+u+\frac{2k}{R}\Big)h\Big(x+u+\frac{2k}{R}\Big)dx \\
& =\sum_{k \in \Z} \int_{|y-u-\frac{2k}{R}|\leq \frac{1}{R}}f(y)h(y)dy \\
& \leq \sum_{k \in \Z} \int_{|y-u-\frac{2k}{R}| \leq \frac{1}{R}} f(y)dy \sup_{z: |z-u-\frac{2k}{R}|\leq \frac{1}{R}}h(z)
\end{align*}
for any $|u|\leq \frac{1}{R}$. Averaging over $u$,
\begin{align*}
\int_\R f(x)h(x)dx & \leq \sum_{k \in \Z} 2R \int_{-1/R}^{1/R}\int_{|y-u-\frac{2k}{R}| \leq \frac{1}{R}} f(y)dy \sup_{z: |z-u-\frac{2k}{R}|\leq \frac{1}{R}}h(z)\; du \\
& =2R \sum_{k \in \Z} \int_{-1/R+2k/R}^{1/R+2k/R} \int_{|y-x| \leq \frac{1}{R}} f(y)dy \sup_{z: |z-x|\leq \frac{1}{R}}h(z)\; dx \\
& = 2R \int_\R \int_{|y-x| \leq \frac{1}{R}} f(y)dy \sup_{z: |z-x|\leq \frac{1}{R}}h(z)\; dx,
\end{align*}
as required.
\end{proof}

\begin{proof}[Proof of Theorem \ref{ReverseThm}]
We begin by using classical Littlewood--Paley theory in the  form of
\eqref{ReverseCont} to write
\begin{align*}
\int_{\R^d} |f(x)|^2 w(x)dx & \lesssim \int_0^\infty \int_{\R^d} |f \ast \phi_t(y)|^2 M^3w(y)dy \frac{dt}{t}.
\end{align*}
The support conditions on $\widehat{\phi}$ and $\widehat{f}$ (see the discussion in Section \ref{sec:Horm}) reduce the range for the $t$-integration to those $t$ such that $0<t^\alpha \leq 1$. Choosing $\varphi \in \CS$ such that $\widehat{\varphi}=1$ on the support of $\widehat{\phi}$ and $\supp(\widehat{\varphi})\subseteq \{\xi \in \R^d : \frac{1}{4} \leq |\xi| \leq 4\}$ allows us to write $f \ast \phi_t(y)=f \ast \phi_t \ast \varphi_t (y)$. Combining this with applications of the Cauchy--Schwarz inequality and Fubini's theorem gives
\begin{align*}
\int_{\R^d} |f(x)|^2 w(x)dx & \lesssim \int_{t^\alpha \leq 1} \int_{\R^d} |f \ast \phi_t(y)|^2 (|\varphi_t| \ast M^3w)(y) dy \frac{dt}{t} \\
& \lesssim \int_{t^\alpha \leq 1} \int_{\R^d} |f \ast \phi_t (y)|^2 A_t^*M^3w(y)dy \frac{dt}{t},
\end{align*}
where $A_t^* w(x):=\sup_{r \geq t}A_rw(x)$ and
$$
A_t w(x):=\frac{1}{|B(x,t)|}\int_{B(x,t)} w.
$$
Elementary considerations reveal that $A_t^*w \lesssim A_tA_t^*w \leq A_tMw$, so applying Lemma \ref{L1Linf} at scale
$R=t^{\alpha-1}$ yields
\begin{align*}
\int_{\R^d} |f(x)|^2 w(x)dx & \lesssim \int_{t^{\alpha}\leq 1} \int_{\R^d} \int_{|y-x|\leq t^{1-\alpha}} |f \ast \phi_t(y)|^2\frac{dy}{t^{(1-\alpha)d+2\beta}} \sup_{z: |z-x|\leq t^{1-\alpha}} t^{2\beta} A_t M^4 w(z)\; dx \frac{dt}{t} \\
& \leq \int_{\R^d} \int_{t^\alpha \leq 1} \int_{|y-x|\leq t^{1-\alpha}} |f \ast \phi_t(y)|^2 \frac{dy}{t^{(1-\alpha)d+2\beta}} \frac{dt}{t} \mathcal{M}_{\alpha,\beta}M^4w(x)dx,
\end{align*}
where the last inequality follows by taking the supremum in $t$, since
$$
\sup_{t^\alpha \leq 1} \sup_{z:|z-x|\leq t^{1-\alpha}} t^{2\beta} A_t M^4w(z) = \CM_{\alpha,\beta} M^4w(x),$$
by the definition of $\CM_{\alpha,\beta}$.
\end{proof}

Our proof of Theorem \ref{ForwardThm} is a simple consequence of the $L^2$-boundedness of more classical square functions of Littlewood--Paley type; see for example an analogous result for $g_\lambda^*$ given in \cite{Stein70SI}. We include the short argument here mainly for the sake of completeness.
\begin{proof}[Proof of Theorem \ref{ForwardThm}]
By Fubini's theorem,
\begin{align*}
\int_{\R^d} g_{\alpha,0,\lambda}^*(f)(x)^2w(x)dx
& = \int_{\R^d} \int_{t^\alpha \leq 1} |f \ast \phi_t(y)|^2 R_t^\lambda \ast w (y) dy \frac{dt}{t}.
\end{align*}
Since
$$
\sup_{t} R_t^\lambda \ast w \lesssim Mw
$$
for $\lambda>1$, we have
\begin{align*}
\int_{\R^d} g_{\alpha,0,\lambda}^*(f)(x)^2w(x)dx
\lesssim \int_{\R^d} \int_0^\infty |f \ast \phi_t(y)|^2 \frac{dt}{t} Mw(y) dy,
\end{align*}
which by an application of classical Littlewood--Paley theory in the form of \eqref{ForwardCont} results in
\begin{align*}
\int_{\R^d} g_{\alpha,0,\lambda}^*(f)(x)^2w(x)dx
\lesssim \int_{\R^d} |f(y)|^2 M^2w(y)dy,
\end{align*}
as required.
\end{proof}
As may be expected, it is possible to obtain similar weighted $L^2$ estimates for $g_{\alpha,\beta,\lambda}^*$ for other values of $\beta$ by minor modifications of the above argument.
\section{Proof of Corollary \ref{osccor}}\label{sec:Cor}
In \cite{Sjo81Lp} Sj\"olin establishes that the multiplier $\widehat{K}_{a,b}$ satisfies the Miyachi condition \eqref{MikhlinCondition}, leading to the conclusion
$$
g_{\alpha,\beta}(K_{a,b} \ast f)(x) \lesssim g_{\alpha,0,\lambda}^*(f)(x)
$$
for any $\lambda>0$, by a direct application of Theorem \ref{PointwiseThm}.

In order to prove \eqref{mainosc}, we must force the support condition on the multiplier $\widehat{K}_{a,b}$. We thus choose a function $\varphi \in C^\infty(\R^d)$ such that $\varphi(\xi)=0$ when $|\xi|^{\alpha} \leq 1$ and $\varphi(\xi)=1$ when $|\xi|^{\alpha} \geq 2$ and write $\widehat{K}_{a,b}=(1-\varphi)\widehat{K}_{a,b} + \varphi \widehat{K}_{a,b} = m_0+m_\infty$. The multiplier $m_\infty$ is supported in $\{\xi \in \R^d : |\xi|^\alpha \geq 1\}$ and satisfies the Miyachi condition \eqref{MikhlinCondition}, so Corollary \ref{MikhlinThm} immediately yields \eqref{mainosc} for $T_{m_\infty}$. The inequality for the portion $T_{m_0}$ follows from a straightforward adaptation of the techniques in Section \ref{sec:reverse}. Since $K_0=\widehat{m}_0$ is a rapidly decreasing function, the Cauchy--Schwarz inequality and Fubini's theorem allow us to write
$$
\int_{\R^d} |T_{m_0} f|^2w \lesssim \|K_0\|_1 \int_{\R^d} |f|^2 |K_{0}|\ast w \lesssim \int_{\R^d} |f|^2 A_1^*w \lesssim \int_{\R^d} |f|^2 M^2 \CM_{\alpha,\beta} M^4 w,
$$
where the last inequality follows from the pointwise bound
$$
A_1^*w \lesssim A_1 A_1^* w \lesssim \CM_{\alpha,\beta} A_1^*w \lesssim M^2 \CM_{\alpha,\beta} M^4 w.
$$
\section{Proof of Corollary \ref{PDEcorGlob}: applications to dispersive PDE}\label{sec:Dispersive} We shall prove Corollary \ref{PDEcorGlob} by establishing a more general statement for multipliers $m$ defined on the whole of $\R^d\backslash\{0\}$, satisfying the two-sided condition
\begin{equation}\label{twoside}
|D^\gamma m(\xi)|\lesssim
\begin{cases}
|\xi|^{-\beta+|\gamma|(\alpha-1)}, \:\:\:\: &\text{if} \:\:\:\: |\xi|^\alpha \geq 1
\\
|\xi|^{-\beta-|\gamma|}, \:\:\:\: &\text{if} \:\:\:\: |\xi|^\alpha \leq 1,
\end{cases}
\end{equation}
for all $\gamma \in \N^d$ such that $|\gamma| \leq \lfloor \frac{d}{2} \rfloor +1$; a natural example being the multiplier $m_{\alpha,\beta}(\xi):=|\xi|^{-\beta}e^{i|\xi|^\alpha}$. One might formulate a certain weaker two-sided H\"ormander-type condition similar to \eqref{HormSobSD} here, although we refrain from doing so for the sake of simplicity.
\begin{corollary}\label{mainGlobalcor}
If $m:\mathbb{R}^d\backslash\{0\}\rightarrow\mathbb{C}$ satisfies \eqref{twoside}
for all $\gamma \in \N^d$ such that $|\gamma| \leq \lfloor \frac{d}{2} \rfloor +1$, then
\begin{equation}\label{sleepy}\int_{\mathbb{R}^d}|T_mf|^2w\lesssim
\int_{\mathbb{R}^d}|f|^2M^2\mathfrak{M}_{\alpha,\beta}M^4w,
\end{equation}
where
$$
\mathfrak{M}_{\alpha,\beta}w(x)=\sup_{(y,r) \in \Lambda_\alpha(x)} \frac{1}{|B(y,r)|^{1-2\beta/d}} \int_{B(y,r)}w,
$$
and
$$
\Lambda_\alpha(x):=\{(y,r) \in \R^d \times \R_+ : |x-y|\leq r^{1-\alpha}\}.
$$
\end{corollary}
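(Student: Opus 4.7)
My plan is to decompose the multiplier into high- and low-frequency pieces matching the dichotomy in the hypothesis \eqref{twoside}, and to handle each by an instance of the square-function machinery developed in Sections \ref{sec:Pointwise} and \ref{sec:reverse}. Specifically, fix a smooth cutoff $\varphi$ with $\varphi(\xi)=1$ for $|\xi|^\alpha\geq 2$ and $\varphi(\xi)=0$ for $|\xi|^\alpha\leq 1$, and write $m=m_\infty+m_0$ with $m_\infty:=\varphi m$ and $m_0:=(1-\varphi)m$. The maximal function $\mathfrak{M}_{\alpha,\beta}$ is designed precisely to dominate the natural maximal operators attached to both the low-scale ($r^\alpha\leq 1$) and high-scale ($r^\alpha\geq 1$) regimes that arise from these two pieces.

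The piece $T_{m_\infty}$ is immediate. By Leibniz, $m_\infty$ inherits from $m$ the pointwise Miyachi bound \eqref{MikhlinCondition} on its support $\{|\xi|^\alpha\geq 1\}$ (the first case of \eqref{twoside}), and hence also \eqref{HormSobSD}. Corollary \ref{MikhlinThm} then gives
\[
\int_{\R^d}|T_{m_\infty}f|^2 w\lesssim\int_{\R^d}|f|^2 M^2\CM_{\alpha,\beta}M^4 w,
\]
and since $\Gamma_\alpha(x)\subseteq\Lambda_\alpha(x)$ we have the pointwise bound $\CM_{\alpha,\beta}w\leq\mathfrak{M}_{\alpha,\beta}w$, completing this half.

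For $T_{m_0}$, the multiplier $m_0$ is supported in $\{|\xi|^\alpha\leq 2\}$ and satisfies the classical-type H\"ormander condition $|D^\gamma m_0(\xi)|\lesssim|\xi|^{-\beta-|\gamma|}$ inherited from the second case of \eqref{twoside}. Here the plan is to run a parallel version of the mechanism of Corollary \ref{MikhlinThm} in which the roles of small and large $t$ are interchanged. I would introduce the dual square functions $g^{\natural}_{\alpha,\beta}$ and $g^{\natural,*}_{\alpha,\beta,\lambda}$, obtained by replacing $t^\alpha\leq 1$ with $t^\alpha\geq 1$ in the definitions of $g_{\alpha,\beta}$ and $g^*_{\alpha,\beta,\lambda}$, together with the dual maximal operator $\CM^\natural_{\alpha,\beta}$ associated with $\Gamma^\natural_\alpha(x):=\{(y,r)\in\R^d\times\R_+ : r^\alpha\geq 1,\ |x-y|\leq r^{1-\alpha}\}$. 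Adapting the proofs of Theorems \ref{PointwiseThm}--\ref{ForwardThm} to a dual subdyadic decomposition of $\{|\xi|^\alpha\leq 1\}$ --- with decoupling and recoupling identities analogous to \eqref{decoupling} and \eqref{recoupling}, and a single-scale pointwise estimate analogous to \eqref{pointlocal} --- should yield the three required ingredients, and chaining them via \eqref{mechanism} produces $\int|T_{m_0}f|^2 w\lesssim\int|f|^2 M^2\CM^\natural_{\alpha,\beta}M^4 w$. Since $\Gamma^\natural_\alpha(x)\subseteq\Lambda_\alpha(x)$ yields $\CM^\natural_{\alpha,\beta}\leq\mathfrak{M}_{\alpha,\beta}$, summing this with the $m_\infty$ contribution delivers \eqref{sleepy}.

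The main obstacle is this dual construction. The dual subdyadic balls (with $\dist(B,0)^\alpha\leq 1$ and $r(B)\sim\dist(B,0)^{1-\alpha}$) are less natural than their high-frequency counterparts from Section \ref{sec:Horm}: for $0<\alpha<1$ they can be comparable to or even larger than their distance from the origin, and for $\alpha>1$ they can be large even when close to the origin. Consequently the dual partition of unity, the analog of the decoupling/recoupling inequalities, and in particular the single-scale analog of \eqref{pointlocal} --- whose proof in Section \ref{subsec:localpointwise} relies on splitting into $|z|\leq t^{1-\alpha}$ and $|z|\geq t^{1-\alpha}$, with $t^\alpha\leq 1$ implicit --- will require careful modification, though the overall strategy of Section \ref{sec:Pointwise} carries over.
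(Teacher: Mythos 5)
Your treatment of $T_{m_\infty}$ is correct and matches the paper. However, your plan for $T_{m_0}$ has a genuine gap, and the machinery you propose to build would not exist. The ``dual subdyadic balls'' you want---with $\dist(B,0)^\alpha\le 1$ and $r(B)\sim\dist(B,0)^{1-\alpha}$---are geometrically impossible for essentially every $\alpha\neq 0$. For $0<\alpha<1$ the constraint $\dist(B,0)\le 1$ forces $r(B)\sim\dist(B,0)^{1-\alpha}\gg\dist(B,0)$ as $\dist(B,0)\to 0$, so any such ball would have to contain the origin, contradicting $\dist(B,0)>0$; for $\alpha>1$ the exponent $1-\alpha$ is negative and $r(B)\to\infty$ as $\dist(B,0)\to 0$, which is worse; and for $\alpha<0$ the set $\{|\xi|^\alpha\le 1\}=\{|\xi|\ge 1\}$ is unbounded while $r(B)\sim\dist(B,0)^{1-\alpha}$ with $1-\alpha>1$ again gives $r(B)\gg\dist(B,0)$ at large distances. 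There is no bounded-overlap cover by such balls, so the dual decoupling/recoupling program cannot get off the ground. You noticed this tension in your last paragraph but did not resolve it; it is fatal, not a technicality.

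The observation you are missing is much simpler: the second case of \eqref{twoside}, namely $|D^\gamma m_0(\xi)|\lesssim|\xi|^{-\beta-|\gamma|}$, is \emph{exactly} the Miyachi condition \eqref{MikhlinCondition} with the parameter $\alpha$ replaced by $0$ (and the same $\beta$). Since $|\xi|^0\ge 1$ for all $\xi\neq 0$, the support hypothesis of Corollary \ref{MikhlinThm} is vacuous for the parameter value $0$. So the paper simply applies Corollary \ref{MikhlinThm} a second time, now with $\alpha=0$, to $m_0$ (after absorbing the harmless transition region $1\le|\xi|^\alpha\le 2$ where both cases of \eqref{twoside} are comparable since $|\xi|\sim 1$). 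This yields
$$
\int_{\R^d}|T_{m_0}f|^2 w\lesssim\int_{\R^d}|f|^2 M^2\CM_{0,\beta}M^4 w,
$$
and $\CM_{0,\beta}$ is pointwise comparable to the classical fractional maximal function $M_{2\beta}$, which (being the supremum over all balls with $x$ in the associated region $\Gamma_0(x)\subseteq\Lambda_\alpha(x)$) satisfies $\CM_{0,\beta}\lesssim\mathfrak{M}_{\alpha,\beta}$. Combined with $\CM_{\alpha,\beta}\le\mathfrak{M}_{\alpha,\beta}$ for the $m_\infty$ piece, this gives \eqref{sleepy}. No parallel ``large-$t$'' theory is needed; the existing Corollary \ref{MikhlinThm} already covers both regimes by varying the parameter.
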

Corollary \ref{PDEcorGlob} follows from Corollary \ref{mainGlobalcor} by a straightforward scaling argument, the details of which we leave to the interested reader.
Before we turn to the proof of Corollary \ref{mainGlobalcor} we make some brief contextual remarks which illustrate the optimality of \eqref{sleepy} in the non-classical case $\alpha\neq 0$.

Corollary \ref{mainGlobalcor}, combined with the trivial $L^{d/(2\beta)}\rightarrow L^\infty$ bound on $\mathfrak{M}_{\alpha,\beta}$ and an elementary duality argument, quickly leads to the sharp $L^p\rightarrow L^q$ bounds for the class of multipliers satisfying \eqref{twoside}; see Miyachi \cite{Mi80}.
Hence for $\alpha\neq 0$, $\mathfrak{M}_{\alpha,\beta}$ necessarily fails to satisfy any other $L^p\rightarrow L^q$ inequalities. This is in contrast with the ``local" maximal functions $\mathcal{M}_{\alpha,\beta}$ associated with the regions $\Gamma_\alpha(x)$ studied in the previous sections, where $L^p\rightarrow L^q$ bounds exist with $q<\infty$; see the forthcoming Section \ref{sec:MaxOp}.

As with the classical fractional maximal operators, $\mathfrak{M}_{\alpha,\beta}$ behaves well on the power weights
$w_\gamma(x):=|x|^{-\gamma}$, with $0\leq \gamma<d$. Indeed it is straightforward to verify that
$\mathfrak{M}_{\alpha,\gamma/2} w_\gamma(x)\lesssim 1$ uniformly in $x\in\mathbb{R}^d$ and $\alpha\in\mathbb{R}$, and so Corollary \ref{mainGlobalcor} applied to the specific $m_{\alpha,\beta}$ reduces to
\begin{equation}\label{special}
\int_{\R^d} |e^{i(-\Delta)^{\alpha/2}}f(x)|^2|x|^{-\gamma}dx \lesssim \int_{\R^d} |(-\Delta)^{\gamma/4} f(x)|^2dx.
\end{equation}
This special case is somewhat degenerate as the presence of the parameter $\alpha$ is not detected in the estimate. It is instructive to prove \eqref{special} directly by an application of the classical Hardy inequality
$$
\int_{\R^d} |f(x)|^2|x|^{-\gamma}dx \lesssim \int_{\R^d} |(-\Delta)^{\gamma/4} f(x)|^2dx,
$$
followed by the energy conservation identity $\|e^{i(-\Delta)^{\alpha/2}}f\|_2=\|f\|_2$.
\begin{proof}[Proof of Corollary \ref{mainGlobalcor}]
Let $\eta\in C^\infty(\R^d)$ be such that $\eta(\xi)=0$ when $|\xi|^\alpha \leq 1$ and $\eta(\xi)=1$ when $|\xi|^\alpha \geq 2$, and write $m=m_1+m_2$, with $m_1=m\eta$ and $m_2=m(1-\eta)$. As $m_1$ is supported in $\{|\xi|^\alpha \geq 1\}$ and satisfies the Miyachi condition \eqref{MikhlinCondition}, Corollary \ref{MikhlinThm} gives
$$
\int_{\mathbb{R}^d}|T_{m_1}f|^2w\lesssim
\int_{\mathbb{R}^d}|f|^2M^2\mathcal{M}_{\alpha,\beta}M^4w.
$$
Similarly, the multiplier $m_2$ satisfies the condition \eqref{MikhlinCondition} for $\alpha=0$, so another application of Corollary \ref{MikhlinThm} gives
$$
\int_{\mathbb{R}^d}|T_{m_2}f|^2w\lesssim
\int_{\mathbb{R}^d}|f|^2M^2\mathcal{M}_{0,\beta}M^4w.
$$
As the maximal operator $\CM_{0,\beta}$ is pointwise comparable to the classical fractional Hardy--Littlewood maximal function of order $2\beta$,
$$
M_{2\beta}w(x)=\sup_{r>0} \frac{1}{r^{d-2\beta}}\int_{B(x,r)}w,
$$
one trivially has $\CM_{0,\beta} \lesssim \mathfrak{M}_{\alpha,\beta}$ for any $\alpha \in \R$. This, together with the obvious $\CM_{\alpha,\beta} \leq \mathfrak{M}_{\alpha,\beta}$, gives \eqref{sleepy} for $m_1$ and $m_2$, from which the result follows.
\end{proof}
We end this section by describing the analogue of Corollary \ref{PDEcorGlob} in the setting of inhomogeneous derivatives.
\begin{corollary}\label{inhomPDEthm}
\begin{equation}\label{weightedINH}
\int_{\R^d} |e^{is (-\Delta)^{\alpha/2}}f|^2w \lesssim \int_{\R^d} |(I-s^{2/\alpha}\Delta)^{\beta/2}f|^2 M^2 \mathcal{M}_{\alpha,\beta}^sM^4w,
\end{equation}
where
$$
\mathcal{M}_{\alpha,\beta}^sw(x):=\sup_{(y,r) \in \Gamma_\alpha^s(x)} \frac{r^{2\beta}}{|B(y,s^{1/\alpha}r)|}\int_{B(y,s^{1/\alpha}r)}w
$$
and
$$
\Gamma_\alpha^s(x)=\{(y,r) \in \R^{d}\times\R_+ : 0<r\leq 1, |x-y|\leq s^{1/\alpha}r^{1-\alpha}\}.
$$
\end{corollary}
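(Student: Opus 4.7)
The plan is to deduce Corollary \ref{inhomPDEthm} from Corollary \ref{MikhlinThm} by a scaling-and-decomposition argument, very much in the spirit of the proof of Corollary \ref{PDEcorGlob}. The starting observation is that the inhomogeneous multiplier
$$\tilde m_s(\xi) := \frac{e^{is|\xi|^\alpha}}{(1+s^{2/\alpha}|\xi|^2)^{\beta/2}}$$
factorises as $\tilde m_s(\xi)=m^*(s^{1/\alpha}\xi)$, where $m^*(\xi):=e^{i|\xi|^\alpha}/(1+|\xi|^2)^{\beta/2}$ is the profile at $s=1$. Since
$$e^{is(-\Delta)^{\alpha/2}}f = T_{\tilde m_s}\bigl[(I - s^{2/\alpha}\Delta)^{\beta/2}f\bigr],$$
proving \eqref{weightedINH} reduces to establishing the weighted $L^2$ bound
$$\int_{\R^d}|T_{\tilde m_s}g|^2 w \lesssim \int_{\R^d}|g|^2 M^2\mathcal{M}_{\alpha,\beta}^s M^4 w.$$

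First I would prove this bound at $s=1$, namely
$$\int_{\R^d}|T_{m^*}g|^2 w \lesssim \int_{\R^d}|g|^2 M^2\mathcal{M}_{\alpha,\beta}M^4 w,$$
by mimicking the proof of Corollary \ref{mainGlobalcor}. Choose a smooth cutoff $\eta$ with $\eta\equiv 0$ on $\{|\xi|^\alpha\leq 1\}$ and $\eta\equiv 1$ on $\{|\xi|^\alpha\geq 2\}$, and write $m^*=m^*_H+m^*_L$ with $m^*_H:=\eta m^*$ and $m^*_L:=(1-\eta)m^*$. The high-frequency piece $m^*_H$ is supported in $\{|\xi|^\alpha\geq 1\}$ and satisfies the Miyachi condition \eqref{MikhlinCondition} with parameters $(\alpha,\beta)$, since on its support $(1+|\xi|^2)^{-\beta/2}\sim|\xi|^{-\beta}$ with the expected behaviour for derivatives; one application of Corollary \ref{MikhlinThm} yields the desired bound involving $\mathcal{M}_{\alpha,\beta}$. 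The low-frequency piece $m^*_L$ is supported where $|\xi|^\alpha\leq 2$, and a Leibniz-plus-Fa\`a-di-Bruno calculation combined with the elementary inequality $|\xi|^{\alpha-|\gamma|}\lesssim|\xi|^{-\beta-|\gamma|}$ on this support (valid in the natural parameter range $\alpha+\beta\geq 0$) shows that $m^*_L$ satisfies \eqref{MikhlinCondition} with parameters $(0,\beta)$. A second application of Corollary \ref{MikhlinThm} then controls $T_{m^*_L}$ by $\mathcal{M}_{0,\beta}$, which is pointwise dominated by $\mathcal{M}_{\alpha,\beta}$ since $r^{1-\alpha}\geq r$ for $r\leq 1$ and $\alpha\geq 0$, so that the defining family of pairs $(y,r)$ for $\mathcal{M}_{\alpha,\beta}$ contains that for $\mathcal{M}_{0,\beta}$.

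Next I would pass from $s=1$ to general $s>0$ by scaling, via the identity $T_{\tilde m_s}g(x)=T_{m^*}(D_{s^{1/\alpha}}g)(s^{-1/\alpha}x)$ with $D_a g(y):=g(ay)$. Applying the $s=1$ estimate to $D_{s^{1/\alpha}}g$ against the weight $D_{s^{1/\alpha}}w$, and undoing the dilation on both sides, reduces matters to the two dilation identities
$$M^4(D_{s^{1/\alpha}}w)=D_{s^{1/\alpha}}M^4 w,\qquad \mathcal{M}_{\alpha,\beta}(D_{s^{1/\alpha}}v)(x)\sim D_{s^{1/\alpha}}\bigl(\mathcal{M}_{\alpha,\beta}^s v\bigr)(x).$$
The first is the standard commutation of $M$ with dilations; the second is a direct change of variables in the supremum defining $\mathcal{M}_{\alpha,\beta}$, under which the radius $r$ transforms to $r/s^{1/\alpha}$ and the ball $B(y,r)$ becomes $B(s^{1/\alpha}y,s^{1/\alpha}r)$, producing precisely the weight $r^{2\beta}/|B(y,s^{1/\alpha}r)|$ and the region $\Gamma_\alpha^s(x)$ appearing in the statement. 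Combining these with the $s=1$ bound yields the estimate on $T_{\tilde m_s}$, which gives \eqref{weightedINH} upon setting $g=(I-s^{2/\alpha}\Delta)^{\beta/2}f$.

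The main technical obstacle is the verification that $m^*_L$ satisfies the $(0,\beta)$-Miyachi condition, since derivatives of $e^{i|\xi|^\alpha}$ near the origin are singular for non-integer $\alpha$ and their interaction with the cutoff $(1-\eta)$ and the smooth factor $(1+|\xi|^2)^{-\beta/2}$ must be controlled uniformly. The remaining bookkeeping — tracking the $s^{-2\beta/\alpha}$ factors through the scaling and confirming that they cancel cleanly to produce the precise form of $\mathcal{M}_{\alpha,\beta}^s$ in the statement — is routine once the dilation identity for $\mathcal{M}_{\alpha,\beta}$ is in hand.
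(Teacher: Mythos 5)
Your proposal is correct and follows essentially the same route as the paper: the scaling identity reducing to the $s=1$ multiplier $\widetilde{m}_{\alpha,\beta}(\xi)=e^{i|\xi|^\alpha}(1+|\xi|^2)^{-\beta/2}$, a high/low frequency split with Corollary \ref{MikhlinThm} applied to the high-frequency piece, a separate treatment of the low-frequency piece, and a dilation identity for $\CM_{\alpha,\beta}$ that produces precisely $\CM_{\alpha,\beta}^s$. The one place you diverge is in the low-frequency piece: the paper handles it ``as in the proof of Corollary \ref{osccor},'' i.e.\ via a direct kernel bound on $\mathcal{F}^{-1}[(1-\eta)\widetilde{m}_{\alpha,\beta}]$, whereas you verify the Miyachi condition \eqref{MikhlinCondition} with parameters $(0,\beta)$ and invoke Corollary \ref{MikhlinThm} a second time together with the pointwise inclusion $\Gamma_0\subseteq\Gamma_\alpha$ for $\alpha\geq0$; both are valid, and your route has the advantage of not relying on smoothness of $\widetilde{m}_{\alpha,\beta}$ at the origin (which fails for non-even-integer $\alpha$), at the cost of the explicit constraint you flag, which in fact reduces to $\beta\geq 0$ (needed already for $\gamma=0$, since $\widetilde{m}_{\alpha,\beta}(0)=1$), automatic in the PDE range of interest.
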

Noting the elementary scaling identity
$$
e^{is(-\Delta)^{\alpha/2}}f(x)= 
T_{\widetilde{m}_{\alpha,\beta}} ((I-\Delta)^{\beta/2} f_s)(x/s^{1/\alpha}),
$$
where $\widetilde{m}_{\alpha,\beta}(\xi)=\frac{e^{i|\xi|^\alpha}}{(1+|\xi|^2)^{\beta/2}}$ and $f_s(x)=f(s^{1/\alpha}x)$, Corollary \ref{inhomPDEthm} follows from Corollary \ref{MikhlinThm} via an elementary rescaling argument; note that in order to apply Corollary \ref{MikhlinThm} to the multiplier $\widetilde{m}_{\alpha,\beta}$ it is necessary to consider its behaviour near the origin separately, as in the proof of Corollary \ref{osccor}.

Corollary \ref{inhomPDEthm} is optimal in the sense that the sharp bounds for $\mathcal{M}_{\alpha,\beta}^s$ given by the forthcoming Theorem \ref{MaxFunctionThm} allow one to recover the $W^{\beta,p}\rightarrow L^q$ bounds for $f\mapsto e^{is(-\Delta)^{\alpha/2}}f$ via \eqref{DualityArgument} in the sharp range of exponents; see Miyachi \cite{Mi81}. Here $W^{\beta,p}=(I-\Delta)^{-\beta/2}L^p$ denotes the classical inhomogeneous Sobolev space on $\mathbb{R}^d$.

\section{$L^p-L^q$ bounds} \label{sec:MaxOp}
In this section we establish the relevant $L^p-L^q$ bounds satisfied by the operators $g_{\alpha,\beta}$ and $\mathcal{M}_{\alpha,\beta}$. We begin by making the formal observation that if an inequality of the form
$$
\int_{\R^d} |f_1|^2 w \lesssim \int_{\R^d} |f_2|^2 \CM w
$$
holds for an $L^{(q/2)'}\rightarrow L^{(p/2)'}$ bounded operator $\mathcal{M}$, with $p,q\geq 2$, then by duality and H\"older's inequality it follows that
\begin{equation}\label{DualityArgument}
\|f_1\|_{q} \leq \|\CM\|^{1/2}_{L^{(q/2)'}-L^{(p/2)'}} \|f_2\|_{p},
\end{equation}
provided we known, a priori, that $\|f_1\|_q < \infty$. Thus, in particular, Lebesgue space bounds on the maximal function $\CM_{\alpha,\beta}$ allow one to deduce reverse bounds on the square function $g_{\alpha,\beta}$ via Theorem \ref{ReverseThm}. This of course leads to $L^p\rightarrow L^q$ bounds on multipliers satisfying \eqref{HormSobSD} via Theorem \ref{PointwiseThm} and \eqref{abstractbounds}, allowing one to reflect on the optimality of all such bounds.
\begin{theorem}\label{MaxFunctionThm}
Let $1<p\leq q\leq \infty$ and $\alpha,\beta \in \R$. If $\alpha>0$ and
$$
\beta \geq \frac{\alpha d}{2q} + \frac{d}{2}\Big(\frac{1}{p}-\frac{1}{q} \Big),
$$
or $\alpha=0$ and
$$
\beta=\frac{d}{2}\Big(\frac{1}{p}-\frac{1}{q} \Big),
$$
or $\alpha<0$ and
$$
\beta \leq \frac{\alpha d}{2q} + \frac{d}{2}\Big(\frac{1}{p}-\frac{1}{q} \Big),
$$
then $\CM_{\alpha,\beta}$ is bounded from $L^p(\R^d)$ to $L^q(\R^d)$.
\end{theorem}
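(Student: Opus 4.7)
The plan is to treat the three regimes of $\alpha$ separately (with $\alpha=0$ reducing to the classical fractional Hardy--Littlewood maximal theorem), establish endpoint estimates on the critical line, and interpolate. By the monotonicity of $\beta\mapsto\CM_{\alpha,\beta}$ -- decreasing in $\beta$ for $\alpha>0$ since $r^{2\beta}\leq r^{2\beta'}$ whenever $r\leq 1$ and $\beta\geq\beta'$, and conversely decreasing in $-\beta$ for $\alpha<0$ -- it suffices to prove boundedness at the critical equality $\beta^{*}=\alpha d/(2q)+d/2(1/p-1/q)$.

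The first endpoint, at $(p,q)=(d/(2\beta),\infty)$, is immediate from H\"older's inequality applied pointwise: $\CM_{\alpha,\beta}w(x)\leq C\sup_{r^\alpha\leq 1}r^{2\beta-d/p}\|w\|_p$, whose supremum is finite precisely under the $q=\infty$ critical condition on $\beta$. The second endpoint, at the diagonal $(p,q)=(\alpha d/(2\beta),\alpha d/(2\beta))$, is a weak-type $(p,p)$ estimate proved by Vitali covering. For each $x$ in the level set $E_\lambda:=\{\CM_{\alpha,\beta}w>\lambda\}$ one selects a witnessing pair $(y_x,r_x)\in\Gamma_\alpha(x)$ with $\int_{B(y_x,r_x)}w\gtrsim\lambda r_x^{d-2\beta}$, and extracts via Vitali a countable pairwise disjoint subfamily $\{B(y_i,r_i^{1-\alpha})\}_i$ whose five-fold dilates cover $E_\lambda$. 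The key geometric observation is that the defining condition $r_i^\alpha\leq 1$ forces $r_i\leq r_i^{1-\alpha}$ for either sign of $\alpha$, so $B(y_i,r_i)\subseteq B(y_i,r_i^{1-\alpha})$ and consequently the \emph{inner} balls $\{B(y_i,r_i)\}_i$ inherit the disjointness. H\"older's inequality applied inside each inner ball, combined with summation using this disjointness, then yields $\lambda^p\sum_i r_i^{d-2\beta p}\lesssim\|w\|_p^p$, and at $\beta=\alpha d/(2p)$ the exponent $d-2\beta p$ equals $(1-\alpha)d$, which directly controls $|E_\lambda|\leq C\sum_i r_i^{(1-\alpha)d}\lesssim\lambda^{-p}\|w\|_p^p$.

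Marcinkiewicz interpolation between these two endpoints delivers strong-type $L^p\to L^q$ at all interior $(p,q)$ of the critical line. In fact, the same Vitali bookkeeping combined with the elementary inequality $\sum a_i^c\leq (\sum a_i)^c$ for $a_i\geq 0$ and $c\geq 1$ (applied with $c=q/p$) gives weak-type $L^p\to L^q$ at every point of the critical line with $q\geq p$, so one may interpolate directly between two weak-type points on the segment. The case $\alpha<0$ is structurally identical, with $r\geq 1$ replacing $r\leq 1$ throughout; the key inclusion $B(y_i,r_i)\subseteq B(y_i,r_i^{1-\alpha})$ persists unchanged since $r_i^\alpha\leq 1$ is again what guarantees it.

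The main obstacle I anticipate is promoting the diagonal case $p=q$ at the critical $\beta=\alpha d/(2p)$ from weak- to strong-type, since this point lies at the boundary of the Marcinkiewicz interpolation segment rather than its interior. I would handle it via the pointwise majorisation $\CM_{\alpha,\alpha d/(2p)}w(x)\leq CM(w^p)(x)^{1/p}$, obtained by applying H\"older inside each inner ball $B(y,r)$ and then using the enlargement $B(y,r)\subseteq B(x,2r^{1-\alpha})$ with the correct volume-ratio bookkeeping $|B(x,2r^{1-\alpha})|/|B(y,r)|\sim r^{-\alpha d}$; combined with standard Hardy--Littlewood maximal inequalities and an additional interpolation step with the trivial $L^\infty\to L^\infty$ bound, this suffices to recover the missing strong-type estimate for $p>1$.
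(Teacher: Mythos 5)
Your approach is genuinely different from the paper's, which regularises the operator by a bump function $P$ to form $\widetilde{\CM}_{\alpha,\beta}$, records the trivial $L^\infty \to L^\infty$ bound at $\beta=0$ and $L^1\to L^\infty$ bound at $\beta=d/2$, and after a Stein analytic interpolation reduces everything to an $H^1 \to L^1$ bound for $\widetilde{\CM}_{\alpha,\alpha d/2}$, proved via $H^1$-atoms. Your covering-based weak-type argument is clean and several of its pieces are correct: the reduction by monotonicity to the critical line, the observation that $r^\alpha\leq 1 \Rightarrow r\leq r^{1-\alpha}$ for either sign of $\alpha$ (so the inner balls inherit disjointness from Vitali), the H\"older computation showing $\lambda^p\sum_i r_i^{d-2\beta p}\lesssim\|w\|_p^p$, and the algebraic identity $(d-2\beta p)(q/p)=d(1-\alpha)$ on the critical line which, together with $\sum a_i^{q/p}\leq(\sum a_i)^{q/p}$, gives weak-type $(p,q)$ at every point of the segment.

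However, there is a genuine gap at the diagonal endpoint $q=p$, $\beta=\alpha d/(2p)$, and you have correctly identified it as the main obstacle but your proposed fix does not close it. The pointwise bound $\CM_{\alpha,\alpha d/(2p)}w\lesssim M(w^p)^{1/p}$ is correct (the volume bookkeeping $|B(x,2r^{1-\alpha})|/|B(y,r)|\sim r^{-\alpha d}$ balances exactly the H\"older prefactor $|B(y,r)|^{(\alpha-1)/p}$), but it only yields weak $(p,p)$ via $M:L^1\to L^{1,\infty}$, and strong $(s,s)$ for $s>p$ via $M:L^{s/p}\to L^{s/p}$. Interpolating the weak $(p,p)$ bound with the trivial $L^\infty\to L^\infty$ bound likewise only delivers strong $(s,s)$ for $s>p$; Marcinkiewicz cannot reach the endpoint $s=p$ of that segment. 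More fundamentally, the Vitali/H\"older framework cannot get below the weak $(p,p)$ boundary, because the H\"older step furnishes no cancellation: indeed one can check that $\widetilde{\CM}_{\alpha,\alpha d/2}\delta_0(x)\sim|x|^{-d}$, so an $L^1\to L^1$ bound fails and some form of cancellation is unavoidable. This is precisely what the paper supplies through the $H^1\to L^1$ estimate, proved by testing against atoms $a$ with $\int a=0$ and using the standard kernel estimates for $P_r*a$; analytic interpolation of this against $L^\infty\to L^\infty$ at $\beta=0$ then produces the strong $(p,p)$ bound on the diagonal. Without an input of this type (an atomic estimate, or a substitute giving better than weak $(p,p)$ at the critical $\beta$), your argument establishes the theorem only on the open part of the critical line $p<q$ and at $q=\infty$, but not at the diagonal endpoint, which the theorem as stated includes.
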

This theorem is a straightforward adaptation of the one-dimensional case in \cite{Ben2014}; we give a sketch of the proof at the end of this section for the sake of completeness.

As discussed above, one may use Theorem \ref{MaxFunctionThm} to deduce reverse bounds for the square functions $g_{\alpha,\beta}$ via Theorem \ref{ReverseThm}.
\begin{corollary}\label{gbounds}
Let $\alpha,\beta \in \R$ and $2 \leq p \leq  q< \infty$. Let $f \in L^q(\R^d)$ be such that $\supp(\widehat{f}) \subseteq \{ \xi \in \R^d  : |\xi|^\alpha \geq 1\}$. If $\alpha>0$ and
$$\frac{\beta}{d} \geq \alpha\Big(\frac{1}{2}-\frac{1}{p} \Big) +\frac{1}{p}-\frac{1}{q},$$
or $\alpha=0$ and
$$\frac{\beta}{d} = \frac{1}{p}-\frac{1}{q},$$
or $\alpha<0$ and
$$\frac{\beta}{d} \leq \alpha\Big(\frac{1}{2}-\frac{1}{p} \Big) +\frac{1}{p}-\frac{1}{q},$$
then
$$
\|f\|_q \lesssim \|g_{\alpha,\beta}(f)\|_p.
$$
\end{corollary}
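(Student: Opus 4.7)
The plan is to combine Theorem \ref{ReverseThm}, Theorem \ref{MaxFunctionThm}, and the duality principle \eqref{DualityArgument} highlighted at the start of Section \ref{sec:MaxOp}. Specifically, applying \eqref{DualityArgument} with $f_1 = f$, $f_2 = g_{\alpha,\beta}(f)$ and auxiliary operator $\CM := \CM_{\alpha,\beta} M^4$ (whose boundedness properties we must verify) immediately yields
\begin{equation*}
\|f\|_q \leq \|\CM_{\alpha,\beta} M^4\|_{L^{(q/2)'} \to L^{(p/2)'}}^{1/2}\,\|g_{\alpha,\beta}(f)\|_p,
\end{equation*}
where the a priori hypothesis $f \in L^q$ is precisely what allows the supremum over test weights in the duality step to be taken. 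The task thereby reduces to checking that the above operator norm is finite under the stated hypotheses on $(\alpha,\beta,p,q)$.

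Since $q<\infty$ we have $(q/2)'>1$, so four iterations of the Hardy--Littlewood maximal theorem give boundedness of $M^4$ on $L^{(q/2)'}$. It then suffices to show $\CM_{\alpha,\beta}:L^{(q/2)'}\to L^{(p/2)'}$, for which I would appeal directly to Theorem \ref{MaxFunctionThm} with source exponent $\tilde{p}:=(q/2)'$ and target $\tilde{q}:=(p/2)'$. The main verification is the translation of indices: using $1/\tilde{p}=1-2/q$ and $1/\tilde{q}=1-2/p$, a direct computation converts the condition for $\alpha>0$ in Theorem \ref{MaxFunctionThm} into
\begin{equation*}
\beta \;\geq\; \frac{\alpha d}{2\tilde{q}} + \frac{d}{2}\left(\frac{1}{\tilde{p}} - \frac{1}{\tilde{q}}\right) \;=\; d\alpha\left(\frac{1}{2} - \frac{1}{p}\right) + d\left(\frac{1}{p} - \frac{1}{q}\right),
\end{equation*}
which is exactly the hypothesis on $\beta$ in the corollary. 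The cases $\alpha=0$ and $\alpha<0$ follow from the analogous one-line translation of indices.

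There is really no serious obstacle here, as the corollary is essentially a repackaging of the three main results of the paper via the duality mechanism outlined in Section \ref{sec:MaxOp}. The only points warranting mild care are (i) the a priori condition $f\in L^q$, which is built into the hypotheses and ensures the left-hand side of the duality bound is finite, and (ii) the endpoint $p=2$, in which $(p/2)'=\infty$, where one checks that the term $\alpha d/(2\cdot\infty)$ drops out consistently and the required bound $\CM_{\alpha,\beta}:L^{(q/2)'}\to L^\infty$ agrees with the $\tilde{q}=\infty$ case of Theorem \ref{MaxFunctionThm}.
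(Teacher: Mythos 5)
Your proposal is correct and follows exactly the route the paper intends (and summarizes in a single sentence just before Corollary \ref{gbounds}): apply the duality principle \eqref{DualityArgument} to the conclusion of Theorem \ref{ReverseThm} with $\CM=\CM_{\alpha,\beta}M^4$, bound $M^4$ on $L^{(q/2)'}$ by the Hardy--Littlewood maximal theorem, and invoke Theorem \ref{MaxFunctionThm} with source exponent $(q/2)'$ and target $(p/2)'$. Your index translation $1/\tilde p=1-2/q$, $1/\tilde q=1-2/p$ correctly converts the hypotheses of Theorem \ref{MaxFunctionThm} into those stated in the corollary, including at the endpoint $p=2$ where $\tilde q=\infty$.
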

Corollary \ref{gbounds} provides reverse Lebesgue space estimates for square functions that decouple frequency decompositions that are typically much finer than dyadic. Forward estimates for much more singular subdyadic square functions were obtained by Rubio de Francia \cite{RdF}, who established the uniform $L^p(\R)$ boundedness of the square function
$$
Sf(x)=\Big(\sum_{I \in \CI} |\Delta_I f(x)|^2 \Big)^{1/2}
$$
for $2\leq p < \infty$. Here $\CI$ denotes an arbitrary collection of disjoint intervals and $\widehat{\Delta_I f}(\xi)=\chi_I(\xi)\widehat{f}(\xi)$; see Journ\'e \cite{Journe} for a similar $d$-dimensional result. As remarked in the introduction, one may view Corollary \ref{gbounds} as a substitute for the known failure of the reverse $L^p$ bounds for $S$ when $p\geq 2$.

Of course the multipliers \eqref{HormSobSD} are bounded from $L^p(\R^d)$ to $L^q(\R^d)$ for the same exponents $(p,q)$ as in Corollary \ref{gbounds}, as the square function $g_{\alpha,0,\lambda}^*$ trivially satisfies $\|g_{\alpha,0,\lambda}^*(f)\|_p \lesssim \|f\|_p$ for $2 \leq p < \infty$. This recovers a number of well-known multiplier theorems since our class \eqref{HormSobSD} naturally contains those considered by Miyachi \cite{Mi81} -- in addition to the classical H\"ormander--Mikhlin multipliers and fractional integrals. In particular, as the model multipliers $|\xi|^{-\beta}e^{i|\xi|^\alpha}\chi_{\{|\xi|^\alpha\geq 1\}}$ are bounded on $L^p(\R^d)$ if and only if $|1/2-1/p| \leq \beta/(\alpha d)$, the maximal operators $\CM_{\alpha,\beta}$ in \eqref{mainweight} are optimal in the sense that they cannot be replaced by variants satisfying additional Lebesgue space bounds.

\begin{proof}[Proof of Theorem \ref{MaxFunctionThm}]
We only concern ourselves with the case $\alpha \neq 0$; the case $\alpha=0$ corresponds to the classical fractional Hardy--Littlewood maximal function. Observe that for $\alpha>0$, the possible radii $r$ in the approach region $\Gamma_\alpha(x)$ satisfy $0<r\leq 1$ and therefore
$$\CM_{\alpha,\beta'}w \leq \CM_{\alpha,\beta}w$$
for $0<\beta < \beta'$. A similar analysis for the case $\alpha<0$ reveals that it is enough to show that $\CM_{\alpha,\beta}$ is bounded from $L^p(\R^d)$ to $L^q(\R^d)$, where $1<p \leq q \leq \infty$, on the line
\begin{equation}\label{sharpline}
\beta = \frac{\alpha d}{2q} + \frac{d}{2}\Big(\frac{1}{p}-\frac{1}{q} \Big).
\end{equation}
We regularise the average in the definition of $\CM_{\alpha,\beta}$ and we prove the estimates for the pointwise larger maximal operator
$$\widetilde{\CM}_{\alpha,\beta}w(x)=\sup_{(y,r)\in \Gamma_\alpha(x)}r^{2\beta}|P_r \ast w(y)|,$$
where $P$ is a nonnegative compactly supported bump function which is positive on $B(0,1)$ and $P_r(x):=r^{-d}P(x/r)$. Trivially,
$$
|\widetilde{\CM}_{\alpha,0}w(x)|=\sup_{(y,r)\in \Gamma_\alpha(x)}|P_r \ast w(y)| \leq \|P\|_1 \|w\|_\infty
$$
and
$$
|\widetilde{\CM}_{\alpha, \frac{d}{2}}w(x)|=\sup_{(y,r)\in \Gamma_\alpha(x)}r^d|P_r \ast w(y)| \leq \|P\|_\infty \|w\|_1,
$$
for every $x \in \R^d$. Analytic interpolation between these two estimates gives $\widetilde{\CM}_{\alpha,\frac{d}{2p}}:L^p(\R^d) \to L^\infty(\R^d)$ for $1\leq p \leq \infty$. A further application of analytic interpolation shows that boundedness of $\widetilde{\CM}_{\alpha,\beta}$ from $L^p(\R^d)$ to $L^q(\R^d)$ holds for $\alpha,\beta$ as in \eqref{sharpline} provided $\widetilde{\CM}_{\alpha,\frac{\alpha d}{2}}: H^1(\R^d) \to L^1(\R^d)$. To this end, it suffices to see
$$
\|\widetilde{\CM}_{\alpha,\frac{\alpha d}{2}}a\|_1 \lesssim 1
$$
uniformly in $H^1(\R^d)$-atoms $a$; recall that an atom $a$ is a function defined on $\R^d$ supported in a cube $Q$ such that $\int_Q a=0$ and $\|a\|_\infty \leq \frac{1}{|Q|}$. By translation-invariance, we may assume that the cube $Q$ is centered at the origin, and thus we may appeal to the standard bounds
\begin{equation*}
|P_r \ast a(y)| \lesssim \begin{cases}
1/|Q| & \text{ if }  r<|Q|^{1/d}, |y|\lesssim 2|Q|^{1/d} \\
|Q|^{1/d}/r^{d+1} & \text{ if } r>|Q|^{1/d}, |y|\lesssim 2r \\
0 & \text{ otherwise.}
\end{cases}
\end{equation*}
As the nature of the region $\Gamma_\alpha$ changes dramatically for $\alpha<0$, $0<\alpha \leq 1$ and $\alpha>1$, an accurate case analysis for the different values of $\alpha$ completes the proof; see \cite{Ben2014} for further details when $d=1$, which extend routinely to higher dimensions.
\end{proof}

\section{Appendix: Weighted $L^2$ estimates for classical square functions}
Let $\phi$ be a smooth function as discussed in Section \ref{sec:Horm}. Consider the square function
$$
s_\phi(f)(x)=\Big(\int_0^\infty |f \ast \phi_t(x)|^2\frac{dt}{t} \Big)^{1/2}.
$$
\begin{proposition}
\begin{equation}\label{ForwardCont}
\int_{\R^d} s_\phi(f)(x)^2 w(x)dx \lesssim \int_{\R^d} |f(x)|^2 Mw(x)dx
\end{equation}
and
\begin{equation}\label{ReverseCont}
\int_{\R^d} |f(x)|^2 w(x)dx \lesssim \int_{\R^d} s_\phi(f)(x)^2 M^3w(x)dx.
\end{equation}
\end{proposition}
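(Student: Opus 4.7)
Both inequalities are standard weighted $L^2$ bounds for the classical Littlewood--Paley square function. The unweighted starting point is Plancherel: the support and Schwartz properties of $\widehat{\phi}$ make $\int_0^\infty|\widehat{\phi}(t\xi)|^2\,dt/t$ a finite constant independent of $\xi\neq 0$, so $\|s_\phi(f)\|_2\lesssim\|f\|_2$. The two weighted refinements follow by adapting classical techniques of Fefferman--Stein/Wilson \cite{Wi89}. I would prove them in the order (forward), then (reverse), using the former inside the latter.

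\textbf{Forward inequality \eqref{ForwardCont}.} I would realise $T f := (f*\phi_t)_{t>0}$ as a vector-valued Calder\'on--Zygmund operator from $L^2(\R^d)$ into $L^2(\R^d;L^2(dt/t))$. A direct scaling computation using Schwartzness of $\phi$ gives the kernel bounds
$\|(\phi_t(x-y))_{t}\|_{L^2(dt/t)}\lesssim|x-y|^{-d}$ and $\|\nabla_y(\phi_t(x-y))_t\|_{L^2(dt/t)}\lesssim|x-y|^{-d-1}$. One then appeals to the (vector-valued) weighted theory that any $L^2$-bounded operator with such a CZ kernel satisfies $\int\|Tf\|^2_{L^2(dt/t)}w\lesssim\int|f|^2 Mw$. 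A more hands-on route is to dualise to a bilinear form with kernel $L_w(y,z)=\int_0^\infty\!\!\int\phi_t(x-y)\overline{\phi_t(x-z)}w(x)\,dx\,\frac{dt}{t}$ and to show a Schur-type bound driven by $Mw$, using the Schwartz decay of $\phi$ to integrate out the $t$-variable.

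\textbf{Reverse inequality \eqref{ReverseCont}.} Here the plan uses the Calder\'on reproducing formula. Choose an auxiliary Schwartz $\tilde\phi$ with $\widehat{\tilde\phi}$ supported in the same annulus as $\widehat{\phi}$, satisfying $\int_0^\infty\widehat{\phi}(t\xi)\widehat{\tilde\phi}(t\xi)\,dt/t=1$ for $\xi\neq 0$, so that $f=\int_0^\infty f*\phi_t*\tilde\phi_t\,dt/t$ (a consequence of \eqref{ContinuousPartition} up to passing from $\widehat{\phi}$ to $|\widehat{\phi}|^2$). Pair against $fw$:
\begin{equation*}
\int_{\R^d}|f|^2w=\int_0^\infty\!\!\int_{\R^d}(f*\phi_t)(y)\,\overline{(fw*\tilde\phi_t)(y)}\,dy\,\tfrac{dt}{t}.
\end{equation*}
Cauchy--Schwarz, splitting the weight as $M^3w\cdot(M^3w)^{-1}$, yields
\begin{equation*}
\int|f|^2w\leq\Bigl(\int s_\phi(f)^2 M^3w\Bigr)^{1/2}\Bigl(\int\!\!\int|fw*\tilde\phi_t|^2(M^3w)^{-1}dy\,\tfrac{dt}{t}\Bigr)^{1/2}.
\end{equation*}
The second factor is a weighted $L^2$-norm of $s_{\tilde\phi}(fw)$, which by the forward inequality applied with the weight $(M^3w)^{-1}$ is bounded by $\int|fw|^2 M[(M^3w)^{-1}]$. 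To close the argument I would invoke a Coifman--Rochberg-type reverse inequality for iterated maximal functions, of the form $M[(M^3w)^{-1}]\lesssim (M^2 w)^{-1}$ on the support of $f$, followed by the pointwise bound $w\cdot (M^2 w)^{-1}\leq 1$; this absorbs the auxiliary factor and leaves $\int|f|^2w$ on the right-hand side, which can then be cancelled.

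\textbf{Main obstacle.} The forward inequality is quite routine given the vector-valued CZ framework. The delicate part is the reverse inequality: keeping exact track of the number of $M$'s through the Cauchy--Schwarz/reproducing-formula argument, and confirming that three iterations suffice (rather than more). This is essentially a bookkeeping exercise around the Coifman--Rochberg principle for powers of maximal functions, but it is the step where the constant ``$3$'' in $M^3w$ is pinned down.
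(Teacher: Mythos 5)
Your forward step has a minor overreach, and your reverse step has a genuine gap that kills the argument; let me take them in turn.

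\textbf{Forward inequality.} Realising $f\mapsto (f\ast\phi_t)_t$ as a vector-valued Calder\'on--Zygmund operator and quoting general weighted CZ theory does not give $\int s_\phi(f)^2w\lesssim\int|f|^2Mw$ with a \emph{single} $M$. For a general (even scalar, $L^2$-bounded) CZ operator $T$ the best one can get in this generality is $\int|Tf|^2w\lesssim\int|f|^2M^kw$ with $k\geq 2$ (the single-$M$ variant already fails for the Hilbert transform). The single-$M$ estimate is a special property of square functions, due to Chang--Wilson--Wolff/Wilson, and is exactly what the paper invokes via \cite{Wi07}. So your plan would yield a correct but weaker statement with more iterations of $M$; to get the stated bound you genuinely need the square-function-specific result, not the CZ black box.

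\textbf{Reverse inequality.} The Calder\'on reproducing formula + duality/Cauchy--Schwarz scheme is a reasonable thing to try, but the closing step does not exist. The proposed ``Coifman--Rochberg-type reverse inequality'' $M[(M^3w)^{-1}]\lesssim (M^2w)^{-1}$ is false: Coifman--Rochberg controls \emph{positive fractional} powers $(Mw)^\delta$, $0<\delta<1$, and says nothing useful about negative powers. Worse, $M[(M^3w)^{-1}]$ is typically identically $+\infty$. Take $w=\chi_{B(0,1)}$: then $M^3w(x)\sim |x|^{-d}(\log|x|)^2$ for $|x|$ large, so $(M^3w)^{-1}(x)\sim |x|^d/(\log|x|)^2$, and for any fixed $x$ the averages
\begin{equation*}
\frac{1}{|B(0,R)|}\int_{B(0,R)}\frac{|y|^d}{(\log|y|)^2}\,dy \sim \frac{R^d}{(\log R)^2}\longrightarrow\infty
\end{equation*}
as $R\to\infty$. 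Thus $M[(M^3w)^{-1}]\equiv\infty$ and the second Cauchy--Schwarz factor is vacuous; the absorption step cannot be carried out. (The failure persists for any fixed number of iterates of $M$ in place of $3$: the inner weight always grows polynomially at infinity.) The paper's proof avoids this entirely and is genuinely different: it splits $f=f_0+\cdots+f_5$ according to well-separated dyadic blocks of the $t$-integral, introduces randomised Littlewood--Paley multipliers $T^\epsilon_j$ with $T^\epsilon_jT^\epsilon_jf_j=f_j$, applies the weighted CZ estimate $\int|T^\epsilon_jg|^2w\lesssim\int|g|^2M^3w$ (Wilson/P\'erez) uniformly in $\epsilon$, and then uses Khintchine plus Cauchy--Schwarz to recover $s_\phi(f)^2$ on the right. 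This is where the exponent $3$ is pinned down; it comes from the weighted CZ theorem, not from a Coifman--Rochberg-style absorption. I would recommend abandoning the duality route and following the randomisation argument.
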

\begin{proof}
The forward inequality \eqref{ForwardCont} is a straightforward consequence of a more general result of Wilson \cite{Wi07}. To prove \eqref{ReverseCont}, we adapt a simple argument from \cite{BH}. Condition \eqref{ContinuousPartition} allows one to write $f=f_0+\cdots+f_5$, where
$$
f_j(x)= \sum_{k \in 6\mathbb{Z}+\{j\}} \int_{2^k}^{2^{k+1}} \phi_t \ast f(x) \frac{dt}{t}.
$$
As $\supp(\widehat{\phi}) \subseteq \{\xi \in \R^d: 1/2 \leq |\xi| \leq 2\}$, the function $\xi \to \int_1^2 \widehat{\phi}(t\xi)\frac{dt}{t}$ is supported in $\{\frac{1}{4} \leq |\xi| \leq 2\}$. Let $\epsilon=(\epsilon_k)$ be a random sequence with $\epsilon_k \in \{-1,1\}$. For each $j=0,\dots,5$, we define $T_j^\epsilon$ by
$$
\widehat{T_j^\epsilon f}(\xi)=\sum_{k \in 6\mathbb{Z}+\{j\}} \epsilon_k \chi(2^{-k} \xi) \widehat{f}(\xi),
$$
where $\chi$ is a smooth function that equals 1 in $\{\frac{1}{4} \leq |\xi| \leq 2\}$ and vanishes outside $\{\frac{1}{8} \leq |\xi| \leq 4\}$. Using elementary support-disjointness considerations, it is clear that $T_j^\epsilon T_j^\epsilon f_j=f_j$, and so
$$
\int_{\R^d} |f(x)|^2 w(x)dx \lesssim \sum_{j=0}^5 \int_{\R^d} |T_j^\epsilon T_j^\epsilon f_j(x)|^2 w(x)dx. 
$$
Next we appeal to the elementary fact that $T_j^\epsilon$ is a standard Calderón--Zygmund operator with kernel bounds that may be taken to be uniform in the sequence $\epsilon$, along with a classical weighted estimate for Calderón--Zygmund operators (see Theorem 3.4 in \cite{Wi89} or Theorem 1.1. in \cite{Pe94}), to deduce that
$$
\int_{\R^d} |f(x)|^2 w(x)dx \lesssim \sum_{j=0}^5 \int_{\R^d} |T_j^\epsilon f_j(x)|^2 M^3w(x)dx,
$$
uniformly in $\epsilon$. 
Taking expectations, and using Khintchine's inequality results in
$$
\int_{\R^d} |f(x)|^2 w(x)dx \lesssim \sum_{j=0}^5 \int_{\R^d} \sum_{k \in 6\Z +\{j\}} \Big| \int_{2^k}^{2^{k+1}} \phi_t \ast f(x) \frac{dt}{t} \Big|^2 M^3w(x)dx,
$$
which together with the Cauchy--Schwarz inequality allows us to conclude that
$$
\int_{\R^d} |f(x)|^2 w(x)dx \lesssim \sum_{j=0}^5 \int_{\R^d} \sum_{k \in 6\Z + \{j\}} \int_{2^k}^{2^{k+1}} |\phi_t \ast f(x)|^2 \frac{dt}{t} M^3(x)dx = \int_{\R^d} s_\phi(f)(x)^2 M^3w(x)dx.
$$
\end{proof}

\bibliographystyle{abbrv} 

\bibliography{BeltranBennettarXivAccepted} 
%
%
%

\end{document}